\theoremstyle{plain}
\newcommand{\cat}{{\sf{cat}}}
\newcommand{\secat}{{\sf{secat}}}
\newcommand{\TC}{{\sf{TC}}}
\newcommand{\hdim}{{\sf{hdim}}}
\newcommand{\Id}{{\sf{Id}}}
\newcommand{\rr}{\mathbb{R}}
\newcommand{\cc}{\mathbb{C}}
\renewcommand{\dim}{{\sf {dim}}}
\newtheorem{theorem}{Theorem}[section]
\newtheorem{prop}[theorem]{Proposition}
\newtheorem{lemma}[theorem]{Lemma}
\newtheorem{corollary}[theorem]{Corollary}
\theoremstyle{definition}
\newtheorem{definition}[theorem]{Definition}
\newtheorem{example}[theorem]{Example}
\newtheorem{subsec}[theorem]{}
\theoremstyle{remark}
\newtheorem{remark}[theorem]{Remark}
\begin{document}

	\title{Sequential Parametrized Motion Planning and its complexity}	
	\author{Michael Farber}
	
	\address{School of Mathematical Sciences\\
Queen Mary University of London\\ E1 4NS London\\UK.}
	
	\email{m.farber@qmul.ac.uk}
	
	\address{School of Mathematical Sciences\\
Queen Mary University of London\\ E1 4NS London\\UK.}
	
	\email{amit.paul@qmul.ac.uk/amitkrpaul23@gmail.com}
	
	%\date{\today}
	
	\subjclass{55M30}
	\keywords{Topological complexity, Parametrized topological complexity, Sequential topological complexity, Fadell - Neuwirth bundle}
	\thanks{Both authors were partially supported by an EPSRC research grant}
\author{Amit Kumar Paul}
	
	\begin{abstract} In this paper we develop a theory of {\it sequential parametrized} motion planning generalising the approach of {\it parametrized} motion planning, which was introduced recently in \cite{CohFW21}. 
A sequential parametrized motion planning algorithm produced a motion of the system which is required to visit a prescribed sequence of states, in a certain order, at specified moments of time. 
The sequential parametrized algorithms are universal as the external conditions are not fixed in advance but rather constitute part of the input of the algorithm. In this article we give a detailed analysis of the sequential parametrized topological complexity of the Fadell - Neuwirth fibration. In the language of robotics, sections of the Fadell - Neuwirth fibration are  algorithms for moving multiple robots avoiding collisions with other robots and with obstacles in the Euclidean space. 
In the last section of the paper we introduce the new notion of {\it $\TC$-generating function of a fibration}, examine examples and 
raise some exciting general questions about its analytic properties. 
\end{abstract}
	\maketitle
\begin{section}{Introduction}

Autonomously functioning systems in robotics are controlled by motion planning algorithms. 
Such an algorithm takes as input the initial and the final states of the system and produces a motion of the system from the initial to final state, as output. The theory of algorithms for robot motion planning is a very active field of contemporary 
robotics and we refer the reader to the monographs \cite{Lat}, \cite{LaV} for further references.

A topological approach to the robot motion planning problem was developed in \cite{Far03}, \cite{Far04}; the topological techniques explained relationships between instabilities occurring in robot motion planning algorithms and topological features of robots' configuration spaces.

A new {\it parametrized} approach to the theory of motion planning algorithms was suggested recently in \cite{CohFW21}. The parametrized algorithms are more universal and flexible, they can function in a variety of situations involving external conditions which are viewed as parameters and are part of the input of the algorithm. A typical situation of this kind arises when we are dealing with collision-free motion of many objects (robots) moving in the 3-space avoiding a set of obstacles, and the positions of the obstacles are a priori unknown. This specific problem was analysed in full detail in \cite{CohFW21}, \cite{CohFW}. 

In this paper we develop a more general theory of {\it sequential parametrized} motion planning algorithms. In this approach the algorithm produces a motion of the system which is required to visit a prescribed sequence of states in a certain order.
The sequential parametrized algorithms are also universal as the external conditions are not a priori fixed but constitute a part of the input of the algorithm. 

In the first part of this article we develop the theory of sequential parametrized motion planning algorithms while the second part consists of a detailed analysis of the sequential parametrized topological complexity of the Fadell - Neuwirth fibration.
 In the language of robotics, the sections of the Fadell - Neuwirth bundle are exactly the algorithms for moving multiple robots avoiding collisions with each other and with multiple obstacles in the Euclidean space. 

Our results depend on the explicit computations of the cohomology algebras of certain configuration spaces. We describe these computations in full detail, they employ the classical Leray - Hirsch theorem from algebraic topology of fibre bundles. 

In the last section of the paper we introduce the new notion of a {\it $\TC$-generating function of a fibration}, discuss a few examples, and raise interesting questions about analytic properties of this function. 

In a forthcoming publication (which is now in preparation) we shall describe the explicit sequential parametrized 
motion planning algorithm for collision free motion of multiple robots in the presence of multiple obstacles in $\rr^d$, generalising  the ones presented in \cite{FarW}. 
These algorithms are optimal as they have minimal possible topological complexity. 

\end{section}

\begin{section}{Preliminaries}
In this section we recall the notions of sectional category and topological complexity; we refer to 
\cite{BasGRT14, CohFW21, CohFW, Far03, Gar19, Rud10, Sva66} for more information.
\subsection*{Sectional category}
Let $p: E \to B$ be a Hurewicz fibration. The \emph{sectional category} of $p$, denoted $\secat[p: E \to B]$ or $\secat(p)$, is defined as the least non-negative integer $k$ such that there exists an open cover 
$\{U_0, U_1, \dots, U_k\}$ of $B$ 
with the property that each open set $U_i$ admits a continuous section $s_i: U_i \to E$ of $p$. 
We set $\secat(p)=\infty$ if no finite $k$ with this property exists.

The \emph{generalized sectional category} of a Hurewicz fibration $p: E \to B$, denoted $\secat_g[p: E \to B]$ or $\secat_g(p)$, is defined as the least non-negative integer $k$ such that $B$ admits a partition $$B=F_0 \sqcup F_1 \sqcup ... \sqcup F_k, \quad F_i\cap F_j = \emptyset \text{ \ for \ } i\neq j$$ 
with each set $F_i$ admitting a continuous section $s_i: F_i \to E$ of $p$.  
We set $\secat_g(p)=\infty$ if no such finite $k$ exists.

It is obvious that $\secat(p) \geq \secat_g(p)$ in general. However, as was established in \cite{Gar19},
in many interesting situations there is an equality:

\begin{theorem}
\label{lemma secat betweeen ANR spaces}
Let $p : E \to B$ be a Hurewicz fibration with $E$ and $B$ metrizable absolute neighborhood retracts (ANRs). 
Then $\secat(p) = \secat_g(p).$
\end{theorem}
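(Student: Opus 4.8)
The plan is to prove the equality by showing $\secat(p) \le \secat_g(p)$, since the reverse inequality is noted to hold in general. So I would start with a generalized partition $B = F_0 \sqcup \dots \sqcup F_k$ with sections $s_i \colon F_i \to E$, and aim to fatten each $F_i$ up to an open set carrying a (continuous) section, while keeping the number of pieces equal to $k+1$. The strategy I would follow is the classical ANR-homotopy extension and continuity argument: use the fact that in an ANR, sections defined on a subset extend to sections on a neighborhood, and then combine these neighborhoods so that the open sets still form a categorical-type cover of the same cardinality.

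\medskip

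\noindent\textbf{Step 1 (local extension of sections).} First I would establish that for each piece $F_i$, the section $s_i \colon F_i \to E$ extends to a continuous section over some open neighborhood $U_i \supseteq F_i$. Because $p$ is a Hurewicz fibration and $E, B$ are ANRs, a partial section is a partial lift, and I would invoke the ANR property (together with the homotopy lifting property) to extend $s_i$ first as a continuous map into $E$ on a neighborhood, then correct it using the fibration structure and a local homotopy so that it becomes an honest section of $p$ on a possibly smaller neighborhood $U_i$. The ANR hypothesis on $E$ is exactly what lets a map defined on the (not necessarily nice) subset $F_i$ extend over a neighborhood; the fibration hypothesis is what upgrades an approximate lift to a genuine section.

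\medskip

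\noindent\textbf{Step 2 (controlling overlaps without increasing the count).} The neighborhoods $U_i$ obtained in Step 1 will generally overlap, and naively they give an open cover but one must ensure the index set is not enlarged. Here I would exploit a dimension-free swelling/shrinking argument: since the $F_i$ partition $B$, I would choose the $U_i$ small enough (using normality of the metrizable space $B$, or a partition-of-unity/separation argument) so that $U_i$ retracts onto, or deformation-retracts toward, $F_i$ in a way compatible with $p$, guaranteeing each $U_i$ still admits a section and that $\{U_0,\dots,U_k\}$ covers $B$. This yields $\secat(p) \le k = \secat_g(p)$.

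\medskip

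\noindent\textbf{Main obstacle.} The delicate point I expect is Step 1: producing a genuine \emph{section} (not merely a continuous map into $E$) over an open neighborhood of an arbitrary subset $F_i$. The ANR property gives a neighborhood extension as a map into $E$, but this extension need not satisfy $p \circ \tilde{s}_i = \Id$; one must then use the homotopy lifting property of the Hurewicz fibration to deform the composite $p \circ \tilde{s}_i$ back to the identity and lift this deformation, which requires the extension to be close enough (controlled by the ANR metric and the fibration) for the correction to stay within the neighborhood. Managing this interplay between the metric control from the ANR structure and the lifting control from the fibration is the heart of the argument, and it is precisely where the joint hypothesis ``$E$ and $B$ are metrizable ANRs'' is used in an essential way.
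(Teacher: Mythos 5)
The paper offers no proof of this statement: it is quoted from \cite{Gar19}, so the comparison below is with the argument of that reference. Your reduction to the single inequality $\secat(p)\le\secat_g(p)$ and your overall strategy --- fatten each piece $F_i$ of a generalized partition to an open set $U_i\supseteq F_i$ carrying a genuine section, so that the count $k+1$ is preserved automatically --- is the right one, and it is indeed the strategy of the cited proof. (Your Step 2, however, is a non-issue: since $F_i\subseteq U_i$ and the $F_i$ cover $B$, the sets $U_0,\dots,U_k$ already form an open cover of the required cardinality; no disjointness, shrinking, or retraction of $U_i$ onto $F_i$ is needed, as overlaps are harmless for sectional category.)

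The genuine gap is in Step 1. The neighborhood extension property of an ANR applies to maps defined on \emph{closed} subsets of a metrizable space, whereas the pieces $F_i$ of a generalized partition are completely arbitrary subsets; a continuous map on a non-closed subset need not extend, even as a mere map into $E$, over any neighborhood of that subset. So the assertion that ``the ANR hypothesis on $E$ is exactly what lets a map defined on the (not necessarily nice) subset $F_i$ extend over a neighborhood'' is not a correct use of the ANR property, and it occurs precisely at the point where the theorem is nontrivial. What is true, and what the proof in \cite{Gar19} actually establishes (paralleling known arguments for the Lusternik--Schnirelmann category of ANRs), is weaker: by a nerve/$\varepsilon$-approximation argument one produces an open $U_i\supseteq F_i$ and a map $g_i\colon U_i\to E$ whose restriction to $F_i$ is only \emph{homotopic} to $s_i$, and which can be arranged so that $p\circ g_i$ is uniformly close to the inclusion $U_i\hookrightarrow B$. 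One then uses that $B$ is an ANR (sufficiently close maps into an ANR are homotopic) to conclude that $p\circ g_i$ is homotopic to the inclusion after possibly shrinking $U_i$, and finally the homotopy lifting property of $p$ to replace the homotopy section $g_i$ by a genuine section over $U_i$. You correctly flag this correction step as the main obstacle, but the proposal leaves both the homotopy-extension lemma for arbitrary subsets and the required closeness control unproved; these constitute the actual content of the theorem, so as written the argument does not go through.
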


In the sequel the term \lq\lq fibration\rq\rq \ will always mean \lq\lq Hurewicz fibration\rq\rq, unless otherwise stated explicitly. 

The following Lemma will be used later in the proofs. 

\begin{lemma}\label{lm:secat} (A) If for two fibrations $p: E\to B$ and $p':E'\to B$ over the same base $B$ there exists a continuous map $f$ shown on the following digram 
\begin{center}
$\xymatrix{
 E  \ar[dr]_{ p} \ar[rr]^{f}  &&  E' \ar[dl]^{p'} \\
& B
}$
\end{center}
then $\secat(p)\ge \secat(p')$. 

(B) If a fibration $p: E\to B$ can be obtained as a pull-back from another fibration $p': E'\to B'$ then 
$\secat(p)\le \secat(p')$. 

(C) Suppose that for two fibrations $p: E\to B$ and $p': E'\to B'$ there exist continuous maps $f, g, F, G$ shown on the commutative diagram
\begin{center}
$\xymatrix{
E\ar[r]^F\ar[d]_p & E'\ar[d]^{p'} \ar[r]^G & E\ar[d]^p\\
B \ar[r]_f & B'\ar[r]_{g}& B,
}$
\end{center}
such that $g\circ f: B\to B$ is homotopic to the identity map $\Id_B: B\to B$. Then 
$\secat(p)\le \secat(p')$. 
\end{lemma}
\begin{proof} Statements (A) and (B) are well-known and follow directly from the definition of sectional category. Below we give the proof of (C) which uses (A) and (B).

Consider the fibration $q: \bar E \to B$ induced by $f: B\to B'$ from $p': E'\to B'$. Here $\bar E=\{(b, e')\in B\times E'; f(b)=p'(e')\}$ and $q(b, e')=b$.  Then 
\begin{eqnarray}\label{sec11}
\secat(q)\le \secat(p')
\end{eqnarray} 
by statement (B). 

Consider the map $\bar G: \bar E\to E$ given by $\bar G(b, e')= G(e')$ for $(b, e')\in \bar E$. Then 
$$(p\circ \bar G)\ (b, e') = p(G(e')) = g(p'(e')) = g(f(b)) = ((g\circ f)\circ q)\ (b, e')$$
and thus $p\circ \bar G = (g\circ f)\circ q $ and using the assumption $g\circ f\simeq \Id_B$ we obtain $p\circ \bar G \simeq q$. 
%We know that the map $g\circ f: B\to B$ is homotopic to the identity map $\Id: B\to B$. 
Let $h_t: B\to B$ be a homotopy with $h_0=g\circ f$ and $h_1=\Id_B$, $t\in I$. 
Using the homotopy lifting property, we obtain a homotopy
$\bar G_t: \bar E\to E$, such that $\bar G_0=\bar G$ and $p\circ \bar G_t = h_t\circ q$. The map $\bar G_1: \bar E \to E$ satisfies $p\circ \bar G_1= q$; in other words, $\bar G_1$ appears in the commutative diagram
\begin{center}
$\xymatrix{
 \bar E  \ar[dr]_{ q} \ar[rr]^{\bar G_1}  &&  E \ar[dl]^{p} \\
& B
}$
\end{center}
Applying to this diagram statement (A) we obtain the inequality $\secat(p) \le \secat(q)$ which together with inequality (\ref{sec11}) implies $\secat(p) \le \secat(p')$, as claimed. 
\end{proof}

\subsection*{Topological complexity}
Let $X$ be a path-connected topological space. Consider the path space $X^I$ (i.e. the space of all continuous 
maps $I=[0,1] \to X$ equipped with compact-open topology) and the fibration 
$$\pi : X^I \to X \times X, \quad \alpha \mapsto (\alpha(0), \alpha(1)).$$ 
The {\it topological complexity} $\TC(X)$ of $X$ is defined as $\TC(X):=\secat(\pi)$, cf. \cite{Far03}. For information on recent developments related to the notion of $\TC(X)$ we refer the reader to \cite{GraLV}, \cite{Dra}. 

% where $\secat(\pi)$ is the sectional category of $\pi$. Recall that the sectional category of a fibration $p: E \to B$ is the least non-negative integer $k$ such that there is an open cover $\{U_i\}_{i=0}^k$ of $B$ and on each open set $U_i$ there exists a continuous section $s_i: U_i \to E$ of $p$. 

For any $r\ge 2$, fix $r$ points $0\le t_1<t_2<\dots <t_r\le 1$ (which we shall call the {\it \lq\lq time schedule\rq\rq}) and consider the evaluation map 
\begin{eqnarray}\label{pir}
\pi_r : X^I \to X^r, \quad \alpha \mapsto \left(\alpha(t_1), \alpha(t_2), \dots,  \alpha(t_r)\right), \quad \alpha \in X^I.
\end{eqnarray}
Typically, one takes $t_i=(i-1)(r-1)^{-1}$. 
The {\it $r$-th sequential topological complexity} is defined as $\TC_r(X):=\secat(\pi_r)$; this invariant was originally introduced by Rudyak \cite{Rud10}. 
It is known that $\TC_r(X)$ is a homotopy invariant, it vanishes if and only if the space $X$ is contractible. Moreover, 
$\TC_{r+1}(X)\ge \TC_r(X)$. Besides, $\TC(X)=\TC_2(X)$.

\subsection*{Parametrized topological complexity} 
For a Hurewicz fibration $p : E \to B$ denote by $E^I_B\subset E^I$ the space of all paths $\alpha: I\to E$ such that $p\circ \alpha: I\to B$ is a constant path. Let $E^2_B\subset E\times E$ denote the space of pairs $(e_1, e_2)\in E^2$ satisfying $p(e_1)=p(e_2)$. 
Consider the fibration 
$$\Pi: E^I_B \to E^2_B = E\times_B E, \quad \alpha \mapsto (\alpha(0), \alpha(1)).$$ 
The fibre of $\Pi$ is the loop space $\Omega X$ where $X$ is the fibre of the original fibration $p:E\to B$. 
The following notion was introduced in a recent paper \cite{CohFW21}:
\begin{definition} 
The {\it parametrized topological complexity} $\TC[p : E \to B]$ of the fibration $p : E \to B$ is defined as
$$\TC[p : E \to B]=\secat[\Pi: E^I_B \to E^2_B].$$
%In other words, $\TC[p : E \to B]$ is 
%the minimal $k$ such that there is an open cover $\{U_i\}_{i=0}^k$ of $E^2_B$ and on each open set there is a section $s_i : U_i \to E^I_B$ of $\Pi$. In other words 
\end{definition}
Parametrized motion planning algorithms are universal and flexible, they are capable to function 
under a variety of external conditions which are parametrized by the points of the base $B$. 
We refer to \cite{CohFW21} for more detail and examples. 

If $B'\subset B$ and $E'=p^{-1}(B')$ then obviously $\TC[p : E \to B] \geq \TC[p' : E' \to B']$ where $p'=p|_{E'}$. In particular, restricting to a single fibre we obtain $$\TC[p : E \to B] \geq \TC(X).$$
\end{section}	
	
	\section{The concept of sequential parametrized topological complexity}
	
In this section we define a new notion of sequential parametrized topological complexity and establish its basic properties.

Let $p : E \to B$ be a Hurewicz fibration with fibre $X$. Fix an integer $r\ge 2$ and denote 
$$E^r_B= \{(e_1, \cdots, e_r)\in E^r; \, p(e_1)=\cdots = p(e_r)\}.$$ 
Let $E^I_B\subset E^I$ be as above 
the space of all paths $\alpha: I\to E$ such that $p\circ \alpha: I\to B$ is constant. Fix $r$ points 
$$0\le t_1<t_2<\dots <t_r\le 1$$ in $I$ (for example, one may take $t_i=(i-1)(r-1)^{-1}$ for $i=1, 2, \dots, r$), which will be called the {\it time schedule}.
Consider the evaluation map 
\begin{eqnarray}\label{Pir}
\Pi_r : E^I_B \to E^r_B, \quad \Pi_r(\alpha) = (\alpha(t_1), \alpha(t_2), \dots,  \alpha(t_r)).\end{eqnarray} 
$\Pi_r$ is a Hurewicz fibration, see \cite[Appendix]{CohFW}, the  fibre of $\Pi_r$ is $(\Omega X)^{r-1}$. 
A section $s: E^r_B \to E^I$ of the fibration $\Pi_r$ can be interpreted as a parametrized motion planning algorithm, i.e. as 
a function which assigns to every sequence of points $(e_1, e_2, \dots, e_r)\in E^r_B$ a continuous path $\alpha: I\to E$ (motion of the system) satisfying $\alpha(t_i)=e_i$ for every $i=1, 2, \dots, r$ and such that the path 
$p\circ \alpha: I \to B$ is constant. The latter condition means that the system moves under constant external conditions 
(such as positions of the obstacles). 

Typically $\Pi_r$ does not admit continuous sections; then the 
motion planning algorithms are necessarily discontinuous. 

The following definition gives a measure of complexity of sequential parametrized motion planning algorithms. This concept is the main character of this paper. 

\begin{definition}\label{def:main}
The {\it $r$-th sequential parametrized topological complexity} of the fibration $p : E \to B$, denoted $\TC_r[p : E \to B]$, is defined as the sectional category of the fibration $\Pi_r$, i.e. 
\begin{eqnarray}
\TC_r[p : E \to B]:=\secat(\Pi_r).
\end{eqnarray}
\end{definition}	

In more detail, $\TC_r[p : E \to B]$ is the minimal integer $k$ such that there is a open cover 
$\{U_0, U_1, \dots, U_k\}$ of $E^r_B$ with the property that each open set $U_i$ 
admits a continuous section $s_i : U_i \to E^I_B$ of $\Pi_r$.

Let  $B'\subset B$ be a subset and let $E'=p^{-1}(B')$ be its preimage, then obviously $$\TC_r[p: E \to B]\geq \TC_r[p': E' \to B']$$ 
where $p'=p|_{E'}$. 
In particular, taking $B'$ to be a single point, we obtain $$\TC_r[p: E \to B] \geq \TC_r(X),$$ where $X$ is the fibre of $p$.

\begin{example}\label{example para tc trivial fibration}
	Let $p: E \to B$ be a trivial fibration with fibre $X$, i.e. $E=B\times X$. 
	In this case we have $E^r_B=B\times X^r$, $E^I_B= B\times X^I$ and the map $\Pi_r: E_B^I\to E^r_B$ becomes
	$$\Pi_r: B\times X^I\to B\times X^r, \quad \Pi_r= \Id_B\times \pi_r,$$
	where $\Id_B: B\to B$ is the identity map and $\pi_r$ is the fibration (\ref{pir}). Thus we obtain in this example
	$$\TC_{r}[p: E \to B]= \TC_r(X),$$
	i.e. for the trivial fibration the sequential parametrized topological complexity equals the sequential topological complexity of the fibre. 
	
%	We have $$E^r_B = \{(x_1, b), (x_2, b) \cdots, (x_r, b) \ | \ x_1, x_2, \cdots, x_r \in X \text{ and } b\in B\} = X^r \times B,$$ $$E^I_B = \{\gamma = (\gamma_X, \gamma_B) : [0, 1]\to X \times B \ | \ \gamma_B \text{ is constant }\} = X^I \times B,$$ and the map $\Pi_r:E_B^I \to E_B^r$ is given by $e_r  \times \Id :X^I \times B \to X^r \times B$ where $e_r  :X^I \to X^r$ is given by $e_r(\gamma_X)= (\gamma_X(0), \gamma_X(\frac{1}{r-1}), \gamma_X(\frac{2}{r-1}), \cdots, \gamma_X(1))$. We obtain an isomorphism of fibrations.	
%$$\xymatrix{
%E_B^I \ar[dd]_{\Pi_r} \ar@{=}[rr] &&  X^I \times B \ar[dd]^{e_r \times \Id} \\\\
%E_B^r \ar@{=}[rr] && X^r \times B
%}$$
%Thus $\secat(\Pi_r)=\secat(e_r \times \Id)=\secat(e_r)=\TC_r(X)$ and we have $$\TC_{r}[p: E \to B]= \TC_r(X).$$
\end{example}
%\begin{lemma}\label{lemma cat} For any space $X$, $\cat(X^r)=secat(e_0)$, where $e_0: P_0X \to X^r, \gamma \to (\gamma(\frac{1}{n}), \gamma(\frac{2}{n}), \cdots, \gamma(1))$ and $P_0X$ is the path space starts with some fix poit $x_0$. \end{lemma} \begin{proof} Let $U$ be a open subset of $X^r$ ad there is a section $s : U\to P_0X$ of $e_0$. Define a homotopy $H : U \times I \to X^r$ by $$H(x_1, x_2, \cdots, x_r, t)= (s(\frac{t}{r}), s(\frac{2t}{r}), s(\frac{3t}{r}), \cdots,  s(t)).$$ Then $H$ is a homotopy between constant map and inclusion map on $U$. \\ Conversely, assume that there is homotopy $H$ from constant map at $(x_0, \cdots, x_0)$ to inclusion map on $U$. So for any $(x_1, x_2, \cdots, x_r)\in U$ we get paths $h_j(t)$ from $x_0$ to $x_j$ for $1\leq j \leq r.$ We define  $s : U\to P_0X$ by $$s(x_1, \cdots, x_r)=h_1(t/2)*h_1^{-1}(t)*h_2(t)*h_2^{-1}(t)* \cdots *h_{r-1}(t)*h_{r-1}^{-1}(t)*h_r(t).$$ Then $s$ is a section for $e_n$ on $U$.\end{proof}

\begin{prop}\label{princ}
	Let $p: E \to B$ be a principal bundle with a connected topological group $G$ as fibre. Then $$\TC_{r}[p: E \to B] = \cat(G^{r-1})=\TC_r(G).$$
\end{prop}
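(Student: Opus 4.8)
The plan is to use freeness and transitivity of the $G$-action on the fibres to identify $\Pi_r$ with a fibration of product form $\Id_E\times q$, and then to compute $\secat(q)$ from the contractibility of its total space.

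First I would bring in the translation function $\tau\colon E\times_B E\to G$ of the principal bundle, i.e. the continuous map determined by $e'=e\cdot\tau(e,e')$. Writing $P_{t_1}G=\{\gamma\in G^I:\gamma(t_1)=1\}$ for the space of paths in $G$ based at the identity at time $t_1$, I would define homeomorphisms $\Phi\colon E\times G^{r-1}\to E^r_B$, $(e,g_2,\dots,g_r)\mapsto(e,eg_2,\dots,eg_r)$, and $\Psi\colon E^I_B\to E\times P_{t_1}G$, $\alpha\mapsto\bigl(\alpha(t_1),\,t\mapsto\tau(\alpha(t_1),\alpha(t))\bigr)$, whose inverses are again built from $\tau$ and the action. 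A direct verification shows that $\Phi^{-1}\circ\Pi_r\circ\Psi^{-1}=\Id_E\times q$, where $q\colon P_{t_1}G\to G^{r-1}$ is the evaluation $\gamma\mapsto(\gamma(t_2),\dots,\gamma(t_r))$. Since sectional category is invariant under isomorphism of fibrations, this gives $\TC_r[p\colon E\to B]=\secat(\Id_E\times q)$.

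Second I would compute $\secat(q)=\cat(G^{r-1})$. The total space $P_{t_1}G$ is contractible via $\gamma_s(t)=\gamma\bigl(t_1+s(t-t_1)\bigr)$, which retracts it onto the constant path at $1$. For any fibration with contractible total space the sectional category equals the Lusternik--Schnirelmann category of the base: a section over $U$ composed with $q$ is the inclusion $U\hookrightarrow G^{r-1}$, which is therefore null-homotopic, so every sectional cover is a categorical one, while $\secat\le\cat$ holds for all fibrations. (Here one uses that $G$ is path-connected, so that $q$ is surjective and the invariant is finite; this is where the connectedness hypothesis enters.)

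Finally I would pass from $q$ to $\Id_E\times q$. The inequality $\secat(\Id_E\times q)\le\secat(q)$ is immediate: a cover $G^{r-1}=V_0\cup\dots\cup V_k$ with sections $\sigma_i$ of $q$ yields the cover $E\times G^{r-1}=\bigcup_i(E\times V_i)$ with product sections $\Id_E\times\sigma_i$. For the reverse inequality I would restrict the fibration $\Id_E\times q$ to the sub-base $\{e_0\}\times G^{r-1}$, which recovers $q$ exactly; since restricting a fibration to a subspace of its base cannot raise the sectional category, $\secat(\Id_E\times q)\ge\secat(q)$. Combining the three steps gives $\TC_r[p\colon E\to B]=\cat(G^{r-1})$. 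Applying this to the trivial principal bundle $G\to\mathrm{pt}$ and invoking Example \ref{example para tc trivial fibration}, according to which the left-hand side is then $\TC_r(G)$, yields the remaining equality $\TC_r(G)=\cat(G^{r-1})$. The step I expect to be the main obstacle is ensuring that the extra product factor $E$ does not inflate the invariant; the clean resolution is the pairing of the product-section upper bound with the restrict-to-a-fibre lower bound, so that $\secat(\Id_E\times q)$ collapses to $\secat(q)$ regardless of the topology of $E$.
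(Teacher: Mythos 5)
Your proof is correct and follows essentially the same route as the paper's: both identify $\Pi_r$ with the product fibration $\Id_E\times q$, where $q$ evaluates the contractible based path space $P_{t_1}G$ on $G^{r-1}$, and conclude $\secat(\Pi_r)=\secat(q)=\cat(G^{r-1})$ (your explicit justification that the factor $E$ does not change $\secat$, and your use of the translation function, just spell out what the paper leaves implicit in its homeomorphisms $F,F'$). The only real difference is the last equality $\cat(G^{r-1})=\TC_r(G)$, which the paper cites from Basabe--Gonz\'alez--Rudyak--Tamaki, whereas you bootstrap it from the case of the trivial bundle $G\to\mathrm{pt}$ together with Example \ref{example para tc trivial fibration}; both are valid.
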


\begin{proof} Let $0\le t_1<t_2<\dots < t_r\le 1$ be the fixed time schedule. 
Denote by $P_0G\subset G^I$ the space of paths $\alpha$ satisfying $\alpha(t_1)=e$ where $e\in G$ denotes the unit element. Consider the evaluation map $\pi_r': P_0G\to G^{r-1}$ where $\pi'_r(\alpha)=(\alpha(t_2), \alpha(t_3), \dots, \alpha(t_r))$.  We obtain the commutative diagram 
\begin{center}
$\xymatrix{
P_0G \times E  \ar[d]_{ \pi'_r \times \Id } \ar[r]^{F}  &  E^I_B \ar[d]^{\Pi_r} \\
G^{r-1} \times E \ar[r]_{F'} & E^r_B
}$
\end{center}
where
 $F: P_0G \times E  \to  E^I_B$ and $F': G^{r-1}\times E\to E^r_B$ are homeomorphisms given by
 $$F(\alpha, x)(t)= \alpha(t)x, \quad F'(g_2, g_3, \dots, g_r, x)= (x, g_2 x, g_3 x, \dots, g_r x),$$
 where $\alpha \in P_0G$, \, $x\in E$, \, $t\in I$ and $g_i\in G$. 
Thus we have
$$
\TC_r[p: E\to B]= \secat(\Pi_r)=\secat(\pi'_r \times \Id )=\secat(\pi'_r).
$$
Clearly, $\secat(\pi'_r)=\cat(G^{r-1})$ since $P_0G$ is contractible. 
And finally $\cat(G^{r-1})=\TC_r(G)$, see  
 \cite[Theorem 3.5]{BasGRT14}.
\end{proof}	

\begin{example} As a specific example consider the Hopf fibration $p: S^3 \to S^2$ with fibre $S^1$. Applying the result of the previous Proposition we obtain 
$$\TC_r[p: S^3 \to S^2]=\TC_r(S^1)=r-1$$
for any $r\ge 2$. 	
\end{example}	
\subsection*{Alternative descriptions of sequential parametrized topological complexity}
Let $K$ be a path-connected finite CW-complex and let $k_1, k_2, \cdots, k_r\in K$ be a collection of $r$ pairwise 
distinct points of $K$, where $r\ge 2$. For a Hurewicz fibration $p: E \to B$, consider the space $E^{K}_B$ of all continuous maps 
$\alpha: K \to E$ such that the composition $p\circ \alpha: K\to B$ is a constant map. 
We equip $E^K_B$ with the compact-open topology induced from the function space $E^K$. 
Consider the evaluation map
$$\Pi_r^K : E^{K}_B \to E^r_B, \quad \Pi^K_r(\alpha) = (\alpha(k_1), \alpha(k_2), \cdots, \alpha(k_r)) \quad \mbox{for}
\quad\alpha\in E^K_B.$$ 
It is known that $\Pi^K_r$ is a Hurewicz fibration, see Appendix to \cite{CohFW}.

\begin{lemma}\label{lemma para tc by secat} For any path-connected finite CW-complex $K$
and a set of pairwise distinct points $k_1, \dots, k_r\in K$ one has $$\secat(\Pi^K_r) = \TC_r[p:E\to B].$$
 \end{lemma}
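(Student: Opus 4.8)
The plan is to show that the two fibrations $\Pi_r$ and $\Pi^K_r$ have equal sectional category by producing the kind of maps covered by Lemma~\ref{lm:secat}. Both fibrations sit over the same base $E^r_B$, so the cleanest route is to exhibit a fibrewise comparison map and invoke part (A), possibly twice, to get inequalities in both directions. The key observation is that the case $K=I$ with $k_i=t_i$ recovers exactly the fibration $\Pi_r$ of Definition~\ref{def:main}, so it suffices to compare an arbitrary $(K; k_1,\dots,k_r)$ with the interval.

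First I would construct a continuous map $\Phi\colon E^I_B \to E^K_B$ over the identity of $E^r_B$. To do this, choose a continuous map $\phi\colon K \to I$ with $\phi(k_i)=t_i$ for every $i$; such a map exists because $K$ is a path-connected finite CW-complex and the $k_i$ are pairwise distinct, so one can route a path (or more generally a continuous function) through the prescribed values at the finitely many marked points. Given $\alpha\in E^I_B$, set $\Phi(\alpha)=\alpha\circ\phi\colon K\to E$. Then $p\circ(\alpha\circ\phi)=(p\circ\alpha)\circ\phi$ is constant since $p\circ\alpha$ is, so $\Phi(\alpha)\in E^K_B$, and $\Phi(\alpha)(k_i)=\alpha(\phi(k_i))=\alpha(t_i)$, which means $\Pi^K_r\circ\Phi=\Pi_r$. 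Applying Lemma~\ref{lm:secat}(A) to $\Phi$ over the common base $E^r_B$ yields $\secat(\Pi_r)\ge\secat(\Pi^K_r)$.

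For the reverse inequality I would build a map $\Psi\colon E^K_B \to E^I_B$ over the identity, which requires choosing a continuous $\psi\colon I\to K$ with $\psi(t_i)=k_i$ for all $i$; again the hypotheses on $K$ and on the distinctness of the $k_i$ guarantee such a path exists, since one can connect $k_1,\dots,k_r$ by a path in the path-connected complex $K$ passing through them in order at the times $t_1<\cdots<t_r$. Setting $\Psi(\beta)=\beta\circ\psi$ gives $\Psi(\beta)\in E^I_B$ with $\Psi(\beta)(t_i)=\beta(k_i)$, so $\Pi_r\circ\Psi=\Pi^K_r$, and Lemma~\ref{lm:secat}(A) now yields $\secat(\Pi^K_r)\ge\secat(\Pi_r)$. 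Combining the two inequalities gives the claimed equality $\secat(\Pi^K_r)=\TC_r[p\colon E\to B]$.

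The only genuinely nontrivial point, and the one I would treat carefully, is the existence and continuity of the interpolating maps $\phi\colon K\to I$ and $\psi\colon I\to K$ with the prescribed marked-point values. The map $\psi$ is the more delicate of the two: it amounts to selecting an ordered path in $K$ through $k_1,\dots,k_r$, which is where path-connectedness of $K$ and the ordering $t_1<\cdots<t_r$ are used, and one should check that no collision among the distinct points forces a discontinuity. Neither of the composite conditions $\psi\circ\phi\simeq\Id$ nor $\phi\circ\psi\simeq\Id$ is needed here, so the simpler part (A) of Lemma~\ref{lm:secat} suffices and the homotopy-theoretic part (C) can be avoided entirely; this keeps the argument to two applications of the fibrewise-map comparison.
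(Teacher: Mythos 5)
Your argument is correct and is essentially the paper's own proof: both directions are obtained by precomposition with a map $\psi\colon I\to K$ (from path-connectedness) and a map $\phi\colon K\to I$ (which the paper constructs explicitly via the Tietze extension theorem, taking $f=\min\{1,\sum_i t_i\psi_i\}$ with $\psi_i(k_j)=\delta_{ij}$), followed by two applications of Lemma~\ref{lm:secat}(A) over the common base $E^r_B$. The only cosmetic difference is that the genuinely ``Tietze'' step is the construction of $\phi\colon K\to I$, not of the path $\psi\colon I\to K$, which is the elementary one.
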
		
\begin{proof} Let $0\le t_1<t_2<\dots<t_r\le 1$ be a given time schedule used in the definition of the map $\Pi_r=\Pi_r^I$ given by (\ref{Pir}). 
Since $K$ is path-connected we may find a continuous map $\gamma: I\to K$ with $\gamma(t_i) =k_i$ for all $i=1, 2, \dots, r$. We obtain a continuous map $F_\gamma: E^K_B \to E^I_B$ acting by the formula $F_\gamma(\alpha) = \alpha \circ \gamma$. It is easy to see that the following diagram commutes
$$
 \xymatrix{
E_B^{K} \ar[rr]^{F}  \ar[dr]_{\Pi_{r}^K}& &E_B^{I} \ar[dl]^{\Pi_{r}^I} \\ & E_B^r
}$$
Using statement (A) of Lemma \ref{lm:secat} we obtain
%Any partial section $s: U\to E^K_r$ of $\Pi_r^K$ defines a partial section $F\circ s$ of $\Pi_r^I$ implying
$$\TC_r[p:E\to B]= \secat(\Pi^I_r)\le \secat(\Pi_r^K).$$
To obtain the inverse inequality note that any locally finite CW-complex is metrisable. Applying Tietze extension 
theorem we can find continuous functions $\psi_1, \dots, \psi_r: K\to [0,1]$ such that $\psi_i(k_j)=\delta_{ij}$, 
i.e. $\psi_i(k_j)$ equals 1 for $j=i$ and it equals $0$ for $j\not=i$. The function $f=\min\{1, \sum_{i=1}^r t_i\cdot \psi_i\}: K\to [0,1]$
has the property that $f(k_i)=t_i$ for every $i=1, 2, \dots, r$. We obtain a continuous map $F': E^I_B \to E^K_B$, where $F'(\beta) = \beta\circ f$, \, $\beta\in E^I_B$, which appears in the commutative diagram
$$
 \xymatrix{
E_B^{I} \ar[rr]^{F'}  \ar[dr]_{\Pi_{r}^I}& &E_B^{K} \ar[dl]^{\Pi_{r}^K} \\ & E_B^r
}$$
By Lemma \ref{lm:secat} this implies the opposite inequality 
$\secat(\Pi_r^K) \le \secat(\Pi^I_r)$
and completes the proof. 
\end{proof}

 The following proposition is an analogue of \cite[Proposition 4.7]{CohFW21}.
\begin{prop}
Let $E$ and $B$ be metrisable separable ANRs and let $p: E \to B$ be a locally trivial fibration. Then the sequential parametrized topological complexity $\TC_r[p: E \to B]$ equals the smallest integer $n$ such that $E_B^r$ admits a partition $$E_B^r=F_0 \sqcup F_1 \sqcup ... \sqcup F_n, \quad F_i\cap F_j= \emptyset \text{ \ for } i\neq j,$$
with the property that on each set $F_i$ there exists a continuous section $s_i : F_i \to E_B^I$ of $\Pi_r$. In other words, $$\TC_r[p : E \to B] = \secat_g[\Pi_r: E_B^I \to E^r_B].$$
\end{prop}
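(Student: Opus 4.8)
The plan is to recognize this proposition as the single equality $\secat(\Pi_r) = \secat_g(\Pi_r)$ for the fibration $\Pi_r : E_B^I \to E_B^r$, and to deduce it directly from Theorem \ref{lemma secat betweeen ANR spaces}. Since $\Pi_r$ is already known to be a Hurewicz fibration (see \cite[Appendix]{CohFW}), the only hypotheses left to check are that its total space $E_B^I$ and its base $E_B^r$ are metrizable ANRs. Once this is done, Theorem \ref{lemma secat betweeen ANR spaces} applies verbatim and yields $\TC_r[p:E\to B] = \secat(\Pi_r) = \secat_g(\Pi_r)$, which is exactly the assertion. Thus the whole content of the proof is the ANR property for these two spaces, and this is where local triviality of $p$ is used.

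First I would record that the fibre $X$ is itself a separable metrizable ANR. Choosing a trivializing neighbourhood $U\subset B$ of a point $b$, the open set $p^{-1}(U)$ is homeomorphic to $U\times X$ and, being open in the ANR $E$, is an ANR; then $X$ is a retract of $U\times X$ (include it as $\{b\}\times X$ and project back), and a retract of an ANR is an ANR. Metrizability and separability of $X$ are inherited as a subspace of $E$. Next I would treat $E_B^r$, exploiting the natural open cover coming from local triviality: for each trivializing $U$ the set of tuples whose common image lies in $U$ is open in $E_B^r$ and homeomorphic to $U\times X^r$, while the analogous subset of $E_B^I$ is homeomorphic to $U\times X^I$. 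Since finite products of ANRs are ANRs, each chart $U\times X^r$ is an ANR, and $E_B^r$, being a subspace of the metrizable space $E^r$, is metrizable. I would then invoke the local-to-global principle that a metrizable space admitting an open cover by ANRs is itself an ANR; this disposes of $E_B^r$.

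The genuinely delicate step is $E_B^I$, and this is where I expect the main obstacle. The charts here are of the form $U\times X^I$, so the question reduces to whether the path space $X^I$, an infinite-dimensional function space in the compact-open topology, is a metrizable ANR. I would rely on the classical theorem that for a compact metrizable space $K$ and a metrizable ANR $Y$ the mapping space $C(K,Y)$ with the compact-open topology is again a metrizable ANR; applied with $K=I$ and $Y=X$ this makes each chart $U\times X^I$ an ANR. Since $E_B^I$ is metrizable as a subspace of $E^I$ (the latter metrizable because $E$ is), the same local-to-global criterion shows $E_B^I$ is an ANR, completing the verification of the hypotheses of Theorem \ref{lemma secat betweeen ANR spaces}.

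The care required is concentrated entirely in correctly citing and applying the function-space ANR theorem and the local-to-global ANR criterion; everything else is the routine bookkeeping of assembling the local trivializations into global open covers. I would note in passing that separability of $E$ and $B$ enters only to guarantee separability (and hence clean metrizability) of the function spaces involved, and is otherwise not needed for the final application of Theorem \ref{lemma secat betweeen ANR spaces}.
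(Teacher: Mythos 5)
Your proposal is correct and follows essentially the same route as the paper: both reduce the statement to the equality $\secat(\Pi_r)=\secat_g(\Pi_r)$ via Theorem \ref{lemma secat betweeen ANR spaces} and devote the whole argument to checking that $E_B^r$ and $E_B^I$ are metrizable ANRs, using local triviality to produce charts $U\times X^r$ and $U\times X^I$ together with a local-to-global ANR criterion. The only difference is one of detail: the paper cites Borsuk for the ANR property of $E_B^r$ and outsources the $E_B^I$ step to \cite[Proposition 4.7]{CohFW21}, whereas you prove that step directly from the classical fact that $C(K,Y)$ is a metrizable ANR for $K$ compact metric and $Y$ a metrizable ANR.
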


\begin{proof}

From the results of \cite[Chapter IV]{Bor67} it follows that the fibre $X$ of $p: E \to B$ is ANR and hence $X^r$ is also ANR. Now, $E^r_B$ is the total space of the locally trivial fibration $E_B^r \to B$  with fibre $X^r$. Thus, applying \cite[Chapter IV, Theorem 10.5]{Bor67}, we obtain that the space $E^r_B$ is ANR. Using \cite[Proposition 4.7]{CohFW21} we see that $E_B^I$ is ANR. 
Finally, using Theorem \ref{lemma secat betweeen ANR spaces}, we conclude that $\TC_r[p : E \to B] = \secat_g[\Pi_r: E_B^I \to E^r_B].$
\end{proof}

\section{Fibrewise homotopy invariance}

\begin{prop}\label{prop homotopy invariant sptc}
	Let $p : E \to B$ and $p': E' \to B$ be two fibrations and let $f: E\to E'$ and $g: E'\to E$ be two continuous maps
	such the following diagram commutes
	$$
 \xymatrix{
E \ar@<-1.4pt>[rr]^{g}  \ar[dr]_{p}& &E'  \ar@<-1.4pt>[ll]^{f}  \ar[dl]^{p'} \\ & B
}$$	
i.e. $p=p'\circ f$ and $p'=p\circ g$. 
If the map $g \circ f : E \to E$ is fibrewise homotopic to the identity map $\Id_{E}: E \to E$ then 
$$\TC_r[p: E \to B] \leq \TC_r[p': E' \to B].$$
\end{prop}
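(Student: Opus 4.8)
The plan is to deduce the inequality from part (C) of Lemma \ref{lm:secat}, applied not to $p$ and $p'$ directly but to the two evaluation fibrations $\Pi_r : E^I_B \to E^r_B$ and $\Pi'_r : (E')^I_B \to (E')^r_B$ that compute the respective sequential parametrized topological complexities. Thus I first need to promote $f$ and $g$ to maps of path spaces and of $r$-fold fibrewise products, verify the resulting square commutes, and then check that the fibrewise homotopy $g\circ f \simeq \Id_E$ descends to an honest homotopy on the base $E^r_B$.

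First I would build the induced maps. Since $p = p' \circ f$, post-composition carries a path $\alpha \in E^I_B$ (one with $p\circ\alpha$ constant) to $f\circ\alpha$, and $p'\circ(f\circ\alpha) = p\circ\alpha$ is again constant; hence $f$ induces $f_* : E^I_B \to (E')^I_B$, $f_*(\alpha) = f\circ\alpha$, and symmetrically $g_* : (E')^I_B \to E^I_B$. On fibre products, if $(e_1, \dots, e_r) \in E^r_B$ then $p'(f(e_i)) = p(e_i)$ are all equal, so $f$ induces $f^r : E^r_B \to (E')^r_B$, $f^r(e_1, \dots, e_r) = (f(e_1), \dots, f(e_r))$, and likewise $g^r$. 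Evaluating at the time schedule gives $\Pi'_r \circ f_* = f^r \circ \Pi_r$ and $\Pi_r \circ g_* = g^r \circ \Pi'_r$, so these assemble into exactly the commutative diagram required by Lemma \ref{lm:secat}(C), with $\Pi_r, \Pi'_r$ in the roles of $p, p'$, with $f_*, g_*$ in the roles of $F, G$, and with $f^r, g^r$ as the base maps.

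The crux is the hypothesis $g^r \circ f^r \simeq \Id_{E^r_B}$ needed to invoke the lemma. Here I would use the fibrewise homotopy $H_t : E \to E$ with $H_0 = g\circ f$, $H_1 = \Id_E$, and $p\circ H_t = p$ for all $t$. Because each $H_t$ preserves fibres, the formula $K_t(e_1, \dots, e_r) = (H_t(e_1), \dots, H_t(e_r))$ defines a map that actually lands in $E^r_B$: for $(e_1, \dots, e_r) \in E^r_B$ one has $p(H_t(e_i)) = p(e_i)$, and these remain equal throughout the homotopy. Thus $K_t$ is a well-defined homotopy from $g^r\circ f^r$ (note $g^r\circ f^r(e_1,\dots,e_r) = ((g\circ f)(e_1), \dots, (g\circ f)(e_r))$) to $\Id_{E^r_B}$. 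I expect this to be the main, and essentially only, obstacle: it is precisely the fibrewise nature of the homotopy that keeps the $r$ points in a common fibre and guarantees $K_t$ stays inside $E^r_B$, whereas a non-fibrewise homotopy would in general push the tuple off the subspace $E^r_B$ and the argument would break down. With $g^r\circ f^r \simeq \Id_{E^r_B}$ in hand, Lemma \ref{lm:secat}(C) yields $\secat(\Pi_r) \le \secat(\Pi'_r)$, which is exactly the desired inequality $\TC_r[p: E\to B] \le \TC_r[p': E'\to B]$.
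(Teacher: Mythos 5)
Your argument is correct and follows essentially the same route as the paper: induce $f^I, g^I$ on $E^I_B$ and $f^r, g^r$ on $E^r_B$, check the two commuting squares, observe that the fibrewise homotopy $g\circ f\simeq \Id_E$ yields $g^r\circ f^r\simeq \Id_{E^r_B}$, and apply Lemma \ref{lm:secat}(C). Your explicit verification that the coordinatewise homotopy stays inside $E^r_B$ is a detail the paper leaves implicit, but the proof is the same.
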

\begin{proof} Denote by $f^r: E^r_B\to E'^r_B$ the map given by $f^r(e_1, \dots, e_r) =(f(e_1), \dots, f(e_r))$ and by 
$f^I: E^I_B \to E'^I_B$ the map given by $f^I(\gamma)(t)= f(\gamma(t))$ for $\gamma\in E^I_B$ and $t\in I$. One defines similarly the maps $g^r: E'^r_B\to E^r_B$ and $g^I: E'^I_B \to E^I_B$. This gives the commutative diagram 
\begin{center}
$\xymatrix{
E^I_B\ar[r]^{f^I}\ar[d]_{\Pi_r} & E'^I_B\ar[d]^{\Pi'_r} \ar[r]^{g^I} & E^I_B\ar[d]^{\Pi_r}\\
E^r_B \ar[r]_{f^r} & E'^r_B\ar[r]_{g^r}& E^r_B,
}$
\end{center}
in which $g^r\circ f^r \simeq \Id_{E^r_B}$. Applying statement (C) of Lemma \ref{lm:secat} we obtain 
\begin{eqnarray*}
\TC_r[p:E\to B]&=&\secat[\Pi_r: E^I_B \to E^r_B]\\ &\le& \secat[\Pi'_r: E'^I_B \to E'^r_B] \\ &=& \TC_r[p':E'\to B].
\end{eqnarray*}
\end{proof}
Proposition \ref{prop homotopy invariant sptc} obviously implies the following property of 
$\TC_r[p:E\to B]$:

\begin{corollary}\label{fwhom}
If fibrations $p : E \to B$ and $p' : E' \to B$ are fibrewise homotopy equivalent then $$\TC_r[p: E \to B] = \TC_r[p': E' \to B].$$
\end{corollary}

\section{Further properties of $\TC_r[p: E\to B]$}
Next we consider products of fibrations: 

\begin{prop}\label{prop product inequality}
Let $p_1: E_1 \to B_1$ and $p_2: E_2 \to B_2$ be two fibrations where the spaces $E_1, E_2, B_1, B_2$ are metrisable.
Then for any $r\geq 2$ we have 
$$\TC_r[p_1\times p_2 : E_1 \times E_2 \to B_1\times B_2]\leq \TC_r[p_1 : E_1 \to B_1] + \TC_r[p_2 : E_2 \to B_2].$$

\end{prop}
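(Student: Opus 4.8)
The plan is to reduce the statement to the subadditivity of ordinary sectional category for a product of fibrations, and then to prove that subadditivity by a partition-of-unity argument. First I would identify the fibration $\Pi_r$ attached to the product $p_1\times p_2$. Writing a point of $(E_1\times E_2)^r_{B_1\times B_2}$ as a tuple $((e^1_1,e^2_1),\dots,(e^1_r,e^2_r))$, the defining condition that $(p_1\times p_2)(e^1_s,e^2_s)$ be independent of $s$ is equivalent to $p_1(e^1_s)$ and $p_2(e^2_s)$ being separately independent of $s$. Hence the coordinate shuffle gives a homeomorphism $(E_1\times E_2)^r_{B_1\times B_2}\cong (E_1)^r_{B_1}\times (E_2)^r_{B_2}$, and likewise $(E_1\times E_2)^I_{B_1\times B_2}\cong (E_1)^I_{B_1}\times (E_2)^I_{B_2}$, since a path into $E_1\times E_2$ projecting to a constant path in $B_1\times B_2$ is exactly a pair of paths projecting to constant paths in $B_1$ and in $B_2$. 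Under these identifications the evaluation map for the product becomes the product of the two evaluation maps, i.e. $\Pi_r^{p_1\times p_2}=\Pi_r^{p_1}\times\Pi_r^{p_2}$. Since a product of metrisable spaces is metrisable, all four spaces involved are metrisable, and the claim reduces to the inequality $\secat(\phi_1\times\phi_2)\le \secat(\phi_1)+\secat(\phi_2)$ for fibrations $\phi_1,\phi_2$ over metrisable bases.

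To prove this reduced inequality, I would set $m=\secat(\phi_1)$, $n=\secat(\phi_2)$ and choose open covers $\{U_0,\dots,U_m\}$ and $\{V_0,\dots,V_n\}$ of the two bases, with sections $\sigma_i$ of $\phi_1$ over $U_i$ and $\tau_j$ of $\phi_2$ over $V_j$. The sets $U_i\times V_j$ form an open cover of the product base, and $\sigma_i\times\tau_j$ is a continuous section over $U_i\times V_j$. The goal is to reorganise these $(m+1)(n+1)$ pieces into $m+n+1$ open sets, indexed by $k=0,\dots,m+n$, each carrying a single continuous section. Since metrisable spaces are paracompact, I would fix partitions of unity $\{u_i\}$ and $\{v_j\}$ subordinate to the two covers and set $w_{ij}=u_i\,v_j$, a partition of unity subordinate to $\{U_i\times V_j\}$. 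For each $k$, and each $i$ with $0\le i\le m$ and $0\le k-i\le n$, I would carve out an open set $W^{(i)}_k\subset U_i\times V_{k-i}$ on which $w_{i,k-i}$ dominates all other $w_{i',k-i'}$ with $i'+(k-i')=k$, breaking ties by the index $i$ so that the $W^{(i)}_k$ are pairwise disjoint for fixed $k$. Their union $W_k$ is then open, and the sections $\sigma_i\times\tau_{k-i}$, being defined on disjoint open pieces, assemble into one continuous section of $\phi_1\times\phi_2$ over $W_k$. Checking that $\{W_k\}_{k=0}^{m+n}$ covers the product base—at every point some $w_{ij}$ is positive, hence some anti-diagonal value is dominant—then yields $\secat(\phi_1\times\phi_2)\le m+n$.

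The bookkeeping of the first step and the product-of-sections construction are routine; the delicate point, and the one I would write out with care, is the simultaneous \emph{openness}, \emph{pairwise disjointness}, and \emph{covering} of the carved sets $W^{(i)}_k$. The naive ``strict maximum'' sets are open and disjoint but omit the tie locus, whereas a closed ``maximum'' condition restores the covering property at the cost of openness; the standard remedy is a lexicographic tie-break combined with a small strict-inequality thickening, and making this precise for a finite partition of unity on a paracompact space is the technical heart of the argument.

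As an alternative worth recording, when $E_1,E_2,B_1,B_2$ are ANRs the pieces may be taken disjoint from the outset: a partition of each base into sets $F_i$ (resp. $G_j$) admitting sections regroups into $H_k=\bigsqcup_{i+j=k}F_i\times G_j$, whose members are automatically disjoint because the $F_i$ are, giving $\secat_g(\phi_1\times\phi_2)\le m+n$ directly, after which the ANR identification $\secat=\secat_g$ transfers the bound to $\secat$. Under the stated metrisability hypothesis, however, the partition-of-unity argument above is the appropriate route.
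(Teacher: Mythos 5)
Your first step is correct and is exactly the route the paper takes: the coordinate-shuffle homeomorphisms $(E_1\times E_2)^r_{B_1\times B_2}\cong (E_1)^r_{B_1}\times (E_2)^r_{B_2}$ and $(E_1\times E_2)^I_{B_1\times B_2}\cong (E_1)^I_{B_1}\times (E_2)^I_{B_2}$ conjugate the evaluation map of $p_1\times p_2$ into $\Pi_r^{p_1}\times\Pi_r^{p_2}$, reducing everything to the inequality $\secat(\phi_1\times\phi_2)\le\secat(\phi_1)+\secat(\phi_2)$ over metrisable bases. (The paper simply defers to the proof of Proposition 6.1 of \cite{CohFW21}, which performs this same reduction followed by the classical partition-of-unity argument.) The substance of your proof therefore lies entirely in the second step, and that is where there is a genuine gap.

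The construction of the sets $W^{(i)}_k$ as you describe it cannot be completed. To patch the sections $\sigma_i\times\tau_{k-i}$ into one continuous section over $W_k$ you need the pieces $W^{(i)}_k$, for fixed $k$, to be simultaneously open and pairwise disjoint, while the union of the $W_k$ must cover; and no tie-breaking rule that compares the functions $w_{i,k-i}$ only along a single anti-diagonal can deliver all three. Strict inequalities give openness and disjointness but omit the tie locus; a lexicographic preference reintroduces non-strict inequalities and loses openness; and any thickening of the strict inequalities that recovers the tie locus destroys disjointness. The correct construction (used in \cite{CohFW21}, and going back to Schwarz and to the proof of the product formula for $\TC$) compares against all higher anti-diagonals: with $w_{ij}=u_iv_j$, set
$$W_{ij}=\bigl\{(x,y):\ w_{ij}(x,y)>0\ \text{ and }\ w_{ij}(x,y)>w_{i'j'}(x,y)\ \text{ for all }(i',j')\neq(i,j)\text{ with }i'+j'\ge i+j\bigr\}$$
and $W_k=\bigcup_{i+j=k}W_{ij}$ for $k=0,\dots,m+n$. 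Each $W_{ij}$ is open and contained in $U_i\times V_j$; for fixed $k$ the sets $W_{ij}$ with $i+j=k$ are pairwise disjoint because the defining inequalities of any two of them contradict each other; and the covering property holds with no tie-break at all, thanks to the multiplicative identity $w_{ij}\,w_{i'j'}=w_{ij'}\,w_{i'j}$: if the maximum of the $w$'s at a point were attained at two places on the same anti-diagonal, it would also be attained on a strictly higher one, so on the highest anti-diagonal carrying the maximum the maximizer is unique and the corresponding $W_{ij}$ contains the point. With this replacement your argument closes; alternatively you could simply invoke Schwarz's product theorem for the genus of fibrations over a paracompact base. A smaller remark: your ANR aside is also not sound as stated, because a section assembled over the disjoint pieces $F_i\times G_j$ of $H_k$ need not be continuous on $H_k$ when those pieces are not open in their union.
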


\begin{proof}
The proof is essentially identical to the proof of \cite[Proposition 6.1]{CohFW21} where it is done for the case $r=2.$
\end{proof}

\begin{prop}
Let $p_1: E_1 \to B$ and $p_2: E_2 \to B$ be two fibrations where the spaces $E_1, E_2, B$ are metrisable. 
Consider the fibration 
$p: E\to B$ where $E=E_1\times_B E_2=\{(e_1, e_2)\in E_1 \times E_2 \ | \ p_1(e_1) = p_2(e_2)\}$ 
and $p(e_1, e_2)=p(e_1)=p(e_2)$. 
Then $$\TC_r[p: E \to B]\, \leq \, \TC_r[p_1 : E_1 \to B] + \TC_r[p_2 : E_2 \to B].$$
\end{prop}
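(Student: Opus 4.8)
The plan is to relate the fibered product $p\colon E_1\times_B E_2\to B$ to the external product $p_1\times p_2\colon E_1\times E_2\to B\times B$ via a pullback along the diagonal, and then combine Proposition~\ref{prop product inequality} with part (B) of Lemma~\ref{lm:secat}. The key observation is that $E=E_1\times_B E_2$ is exactly the pullback of $E_1\times E_2\to B\times B$ along the diagonal map $\Delta\colon B\to B\times B$, $\Delta(b)=(b,b)$. Concretely, a point of the pullback is a pair $\big(b,(e_1,e_2)\big)$ with $\Delta(b)=(p_1\times p_2)(e_1,e_2)=(p_1(e_1),p_2(e_2))$, which forces $p_1(e_1)=b=p_2(e_2)$, so this pullback is naturally homeomorphic to $E$ over $B$.

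\emph{First} I would make this pullback statement precise at the level of the fibrations whose sectional categories compute the invariants, namely $\Pi_r$ for $p$ and the product fibration for $p_1\times p_2$. The fibration computing $\TC_r[p_1\times p_2\colon E_1\times E_2\to B\times B]$ has base $(E_1\times E_2)^{\,r}_{B\times B}$, consisting of $r$-tuples of points of $E_1\times E_2$ all mapping to the same point of $B\times B$. I would check that pulling this back along the natural map $E^r_B\to (E_1\times E_2)^r_{B\times B}$ induced by $\Delta$ recovers the fibration $\Pi_r$ for $p$; equivalently, that the space $E^I_B$ for $p$ and the evaluation $\Pi_r$ sit as the pullback of the corresponding product data. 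Since sectional category does not increase under pullback by Lemma~\ref{lm:secat}(B), this yields
\begin{equation*}
\TC_r[p\colon E\to B]\;\le\;\TC_r[p_1\times p_2\colon E_1\times E_2\to B\times B].
\end{equation*}

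\emph{Then} I would invoke Proposition~\ref{prop product inequality}, which (using that all four spaces $E_1,E_2,B,B$ are metrisable by hypothesis) gives
\begin{equation*}
\TC_r[p_1\times p_2\colon E_1\times E_2\to B\times B]\;\le\;\TC_r[p_1\colon E_1\to B]+\TC_r[p_2\colon E_2\to B],
\end{equation*}
and chaining the two inequalities completes the argument.

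\emph{The main obstacle} I anticipate is verifying carefully that the pullback-along-$\Delta$ identification passes correctly to the path-space level, i.e.\ that $E^I_B\to E^r_B$ for the fibered product really is the pullback of $(E_1\times E_2)^I_{B\times B}\to (E_1\times E_2)^r_{B\times B}$ along the map $E^r_B\to (E_1\times E_2)^r_{B\times B}$, rather than merely a fibrewise map. One must confirm that a path in $E_1\times E_2$ projecting to a constant path in $B\times B$ has both coordinates projecting to the \emph{same} constant in $B$ precisely when its endpoints lie over the diagonal, so that the total space of the pullback is canonically $E^I_B$ with $\Pi_r$ as its projection. Once this diagram-chase is in place the inequality is immediate from the two cited results, so I would keep the verification brief and emphasise the two structural inputs, Lemma~\ref{lm:secat}(B) and Proposition~\ref{prop product inequality}.
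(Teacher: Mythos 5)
Your proposal is correct and is essentially the paper's own argument: the paper simply says ``viewing $B$ as the diagonal of $B\times B$'' gives $\TC_r[p:E\to B]\le \TC_r[p_1\times p_2: E_1\times E_2\to B\times B]$ (via restriction of the fibration to the preimage of the diagonal, which is the same pullback observation you make) and then applies Proposition~\ref{prop product inequality}. Your extra care about checking the identification at the path-space level is sound but not something the paper spells out.
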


\begin{proof} Viewing $B$ as the diagonal of $B\times B$ gives 
$$\TC_r[p: E \to B]\leq \TC_r[p_1\times p_2 : E_1 \times E_2 \to B\times B].$$ 
Combining this inequality with the result of Proposition \ref{prop product inequality} completes the proof.
\end{proof}

\begin{lemma}
For any fibration $p: E \to B$ one has $$\TC_{r+1}[p: E \to B]\geq \TC_{r}[p: E \to B].$$
\end{lemma}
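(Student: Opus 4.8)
The plan is to reduce the inequality to a single application of statement (C) of Lemma \ref{lm:secat}, by exhibiting $\Pi_r$ as a retract (in the fibrewise sense) of $\Pi_{r+1}$. First I would use the fact that the value of $\TC_r[p:E\to B]$ is independent of the chosen time schedule, which follows from Lemma \ref{lemma para tc by secat} applied with $K=I$ and different sets of marked points. This freedom lets me fix a single increasing sequence $0\le t_1<\dots<t_r<t_{r+1}\le 1$ and use $t_1,\dots,t_r$ as the schedule for $\Pi_r$ and the whole sequence as the schedule for $\Pi_{r+1}$. With these compatible choices the forgetful projection $\proj: E^{r+1}_B\to E^r_B$, $(e_1,\dots,e_{r+1})\mapsto(e_1,\dots,e_r)$, satisfies $\Pi_r=\proj\circ\Pi_{r+1}$ on the nose.

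Next I would construct the other half of the required diagram. Let $f: E^r_B\to E^{r+1}_B$ be the ``doubling'' map $f(e_1,\dots,e_r)=(e_1,\dots,e_r,e_r)$; this lands in $E^{r+1}_B$ since $p(e_r)=p(e_r)$, and visibly $\proj\circ f=\Id_{E^r_B}$. To cover $f$ by a map of total spaces I would fix a reparametrization $\phi:I\to I$ that is the identity on $[0,t_r]$ and constant equal to $t_r$ on $[t_r,1]$, so that $\phi(t_i)=t_i$ for $i\le r$ while $\phi(t_{r+1})=t_r$, and set $F:E^I_B\to E^I_B$, $F(\alpha)=\alpha\circ\phi$. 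Because $p\circ(\alpha\circ\phi)=(p\circ\alpha)\circ\phi$ is constant whenever $p\circ\alpha$ is, the map $F$ indeed lands in $E^I_B$, and a direct evaluation at the times $t_i$ gives $\Pi_{r+1}\circ F=f\circ\Pi_r$.

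Finally I would assemble the commutative diagram
$$
\xymatrix{
E^I_B\ar[r]^{F}\ar[d]_{\Pi_r} & E^I_B\ar[d]^{\Pi_{r+1}} \ar[r]^{\Id} & E^I_B\ar[d]^{\Pi_r}\\
E^r_B \ar[r]_{f} & E^{r+1}_B\ar[r]_{\proj}& E^r_B,
}
$$
in which the left square commutes by the identity $\Pi_{r+1}\circ F=f\circ\Pi_r$, the right square commutes by $\Pi_r=\proj\circ\Pi_{r+1}$, and $\proj\circ f=\Id_{E^r_B}$ is (trivially) homotopic to the identity. Statement (C) of Lemma \ref{lm:secat} then yields $\secat(\Pi_r)\le\secat(\Pi_{r+1})$, i.e. $\TC_r[p:E\to B]\le\TC_{r+1}[p:E\to B]$.

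I do not expect a serious obstacle here; the only points requiring care are the justification that the schedule for $\Pi_{r+1}$ may be chosen to extend that for $\Pi_r$, and the verification that $F(\alpha)=\alpha\circ\phi$ preserves the fibrewise-constant condition defining $E^I_B$. Conceptually, the reparametrization $\phi$ simply ``stalls'' the motion on the interval $[t_r,t_{r+1}]$, which is exactly the mechanism that transports a section over a subset of $E^r_B$ to one over its $\proj$-preimage in $E^{r+1}_B$.
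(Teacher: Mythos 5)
Your proof is correct and follows essentially the same route as the paper: a doubling map $f(e_1,\dots,e_r)=(e_1,\dots,e_r,e_r)$ split by the forgetful projection, a ``stall the motion'' map on total spaces, and an application of statement (C) of Lemma \ref{lm:secat}. The only cosmetic difference is that the paper realizes $\Pi_{r+1}$ on $K=[0,2]$ and extends paths constantly on $[1,2]$ (invoking Lemma \ref{lemma para tc by secat} to identify $\secat(\Pi^K_{r+1})$ with $\TC_{r+1}$), whereas you reparametrize within $I$ and invoke the same lemma to justify schedule-independence --- both are valid instances of the same argument.
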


\begin{proof} We shall apply Lemma \ref{lemma para tc by secat} and consider the interval $K=[0,2]$ and the 
time schedule
$0\le t_1< t_2< \dots< t_r\le 1$ and the additional point $t_{r+1}=2.$ We have the following diagram
\begin{center}
$\xymatrix{
E^I_B\ar[r]^{F}\ar[d]_{\Pi_r} & E^K_B\ar[d]^{\Pi^K_{r+1}} \ar[r]^{G} & E^I_B\ar[d]^{\Pi_r}\\
E^r_B \ar[r]_{f} & E^{r+1}_B\ar[r]_{g}& E^r_B,
}$
\end{center}
where $f$ acts by the formula $f(e_1, e_2, \dots, e_r) = (e_1, e_2, \dots, e_r, e_r)$ and $F$ sends a path 
$\gamma: I\to E$, $\gamma\in E^I_B$, to the path $\bar \gamma: K=[0,2]\to E$ where $\bar\gamma|_{[0,1]}=\gamma$ and
$\bar \gamma(t) = \gamma(1)$ for any $t\in [1, 2]$. The vertical maps are evaluations at the points $t_1, \dots, t_r$ and 
at the points $t_1, \dots, t_r, t_{r+1}$, for 
$\Pi_r$ and $\Pi^K_{r+1}$ correspondingly. The map $G$ is the restriction, it maps $\alpha: K\to E$ to $\alpha_I: I\to E$. Similarly, the map $g: E^{r+1}_B \to E^r_B$ is given by $(e_1, \dots, e_r, e_{r+1}) \mapsto (e_1, \dots, e_r)$. The diagram commutes and besides the composition $g\circ f: E^r_B\to E^r_B$ is the identity map. Applying statement (C) of Lemma \ref{lm:secat} we obtain
\begin{eqnarray*}
\TC_r[p:E\to B]&=&\secat[\Pi_r: E^I_B\to E^r_B] \\ &\le& \secat[\Pi^K_{r+1}: E^K_B\to E^{r+1}_B]\\ &=& \TC_{r+1}[p:E\to B].
\end{eqnarray*}\end{proof}

%Let $\TC_{r+1}[p: E \to B] = k$ and $\{U_i:~ i=1,\cdots, k\}$ be an open cover of $E_B^{r+1}$ such that for each $i$ there is a section $s_i : U_i \to E_B^{I_{r+1}}$ of $\Pi'_{r+1}: E_B^{I_{r+1}} \to E_B^{r+1}$. We fix an element $e_b^*\in E$ in each fibre $p^{-1}(b)$ for $b\in B$ and set 
%\begin{center}
%	$V_i=\{(e_1,  \cdots, e_r) \in E_B^r ~|~ (e_1,  \cdots, e_r, e_b^*)\in U_i \text{ for some } b\in B\}.$
%\end{center}
%It is clear that $\{V_i:~ i=1,\cdots, k\}$ covers $E_B^r$. For any 	$(e_1, \cdots, e_r)\in V_i$ we define $s'_i : V_i \to E_B^{I_{r}}$ as $s'_i(e_1, \cdots, e_r)(t_j) = s_i(e_1, \cdots, e_r, e_b^*)(t_j)$ for $t_j\in [0, 1]_j$ and $j=1, \cdots, r$. Then $s'_i$ is a section of $\Pi'_{r}: E_B^{I_{r}} \to E_B^{r}$. So $\TC_{r+1}[p: E \to B]\geq \TC_{r}[p: E \to B]$, for any $r\geq 2$.

%\end{section}
\section{Upper and lower bounds for $\TC_r[p:E\to B]$} %sequential parametrized topological complexity}
% For any fibration $p : E \to B$, since it has homotopy lifting property, $\secat(p) \leq \cat(B)$. So for the fibration $\Pi_r : E_B^I \to E_B^r$ we can say that $\secat(\Pi_r)=\TC_r[p : E \to B]\leq \cat(E_B^r)$. Again we know that paracompact space $X$, $cat(X)\leq\dim(X)$ (cf. \cite{Jam78}). So one can say that $\TC_r[p : E \to B]\leq \dim(E_B^r)$. Now we consider the fibration $E_B^r \to B$ given by the map $p$.

In this section we give upper and lower bound for sequential parametrized topological complexity.

\begin{prop}\label{prop upper bound}
	Let $p: E \to B$ be a locally trivial fibration with fiber $X$, where $E, B, X$ are CW-complexes. Assume that the fiber $X$ is $k$-connected, where $k\ge 0$. Then 
	\begin{eqnarray}\label{upper}
	\TC_{r}[p: E \to B]<\frac{\hdim (E_B^r) + 1}{k+1}\leq \frac{r\cdot \dim X+\dim B + 1}{k+1}.
	\end{eqnarray}
\end{prop}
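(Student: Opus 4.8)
The plan is to bound the sectional category of the fibration $\Pi_r : E_B^I \to E_B^r$ by a standard dimension-connectivity estimate. Recall the classical theorem (due to Schwarz, see \cite{Sva66}) that for a fibration whose fibre $Y$ is $k$-connected one has $\secat \leq \dim(\text{base})/(k+1)$, and more precisely that $\secat$ is strictly less than $(\hdim(\text{base})+1)/(k+1)$. So the heart of the argument is to identify the base and the fibre of $\Pi_r$, check the connectivity hypothesis on the fibre, and then estimate $\hdim(E_B^r)$.

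First I would recall from the discussion preceding Definition \ref{def:main} that the fibre of $\Pi_r : E_B^I \to E_B^r$ is $(\Omega X)^{r-1}$. Since $X$ is $k$-connected with $k \geq 0$, the loop space $\Omega X$ is $(k-1)$-connected, hence the product $(\Omega X)^{r-1}$ is also $(k-1)$-connected. This is exactly the connectivity needed: applying the Schwarz bound with fibre connectivity $k-1$ gives the denominator $(k-1)+1 = k+1$ appearing in (\ref{upper}). Thus the strict inequality $\TC_r[p:E\to B] = \secat(\Pi_r) < (\hdim(E_B^r)+1)/(k+1)$ follows directly once I invoke Schwarz's theorem, provided $E_B^r$ has the homotopy type of a CW-complex so that homotopy dimension is the right notion.

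Next I would establish the second, more concrete inequality by estimating $\hdim(E_B^r)$. Since $p$ is locally trivial with fibre $X$ over a CW-base $B$, the associated fibration $E_B^r \to B$ is locally trivial with fibre $X^r$. A fibration over a base of dimension $\dim B$ with fibre of dimension $r\cdot\dim X$ has total space whose (covering, hence homotopy) dimension is at most $\dim B + r\cdot\dim X$. Therefore $\hdim(E_B^r) \leq r\cdot\dim X + \dim B$, which upon substitution yields the right-hand inequality $(\hdim(E_B^r)+1)/(k+1) \leq (r\cdot\dim X + \dim B + 1)/(k+1)$.

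The main obstacle I anticipate is the careful justification of the homotopy-dimension estimate for the total space $E_B^r$, since one must pass from the honest covering dimension of a locally trivial bundle to the homotopy dimension that feeds into Schwarz's bound; this requires knowing that $E_B^r$ is homotopy equivalent to a CW-complex of the stated dimension (which follows from the CW-structures on $B$ and $X$ and the local triviality, but deserves a citation or a short argument). The connectivity bookkeeping for $(\Omega X)^{r-1}$ and the invocation of the Schwarz inequality are routine by comparison, so I would state them briefly and concentrate the exposition on the dimension bound for $E_B^r$.
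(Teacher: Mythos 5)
Your proposal is correct and follows essentially the same route as the paper's proof: identify the fibre of $\Pi_r$ as the $(k-1)$-connected space $(\Omega X)^{r-1}$, apply Schwarz's dimension--connectivity bound (Theorem 5 of \cite{Sva66}) to get the strict inequality with $\hdim(E_B^r)$, and then bound $\hdim(E_B^r)\le\dim(E_B^r)\le r\cdot\dim X+\dim B$ using the locally trivial fibration $E_B^r\to B$ with fibre $X^r$. No substantive differences.
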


\begin{proof}
Since $X$ is $k$-connected, the loop space $\Omega X$ is $(k-1)$-connected and hence the space $(\Omega X)^{r-1}$ is also $(k-1)$-connected. Thus, the fibre of the fibration $\Pi_r : E_B^I \to E_B^r$ is $(k-1)$-connected and applying Theorem 5 from \cite{Sva66} we obtain:
%Using \cite[Theorem 5]{Sva66} we have 
\begin{equation}\label{equation upper bound}
\TC_{r}[p: E \to B] \, =\, \secat(\Pi_r)\, < \, \frac{\hdim (E_B^r) + 1}{k+1},
\end{equation} where $\hdim(E_B^r)$ denotes  homotopical dimension of $E_B^r$, i.e. the minimal dimension of a CW-complex homotopy equivalent to $E^r_B$, 
$$\hdim(E_B^r ) := \min\{\dim \, Z | \, Z \, \mbox{is a CW-complex homotopy equivalent to} \, E_B^r \}.$$
Clearly, $\hdim(E_B^r)\leq \dim(E_B^r)$. 
The space $E^r_B$ is the total space of a locally trivial fibration with base $B$ and fibre $X^r$. 
%We consider the fibration $E_B^r \to B$ with fibre $X^r$ given by the map $p$. 
Hence, $\dim(E_B^r)\, \leq \, \dim(X^r)+ \dim B= r\cdot \dim X+ \dim B$. Combining this with  (\ref{equation upper bound}), 
we obtain (\ref{upper}). 
%$$\TC_{r}[p: E \to B]<\frac{\hdim (E_B^r) + 1}{k+1}\leq \frac{r\cdot \dim X+\dim B + 1}{k+1}.$$
\end{proof}

Below we shall use the following classical result of A.S. Schwarz \cite{Sva66}: 
\begin{lemma}\label{lemma lower bound of secat}	
For any fibration $p: E \to B$ and coefficient ring $R$, if there exist cohomology classes $u_1, \cdots, u_k\in \ker[p^*: H^*(B; R)\to H^*(E; R)]$ such that their cup-product is nonzero, $u_1 \cup \cdots \cup u_k \neq 0 \in H^*(B; R)$, then $\secat(p) \geq k$.
\end{lemma}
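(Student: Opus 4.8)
The plan is to prove the contrapositive: I will show that if $\secat(p) < k$ then every product of $k$ classes lying in $\ker p^*$ must vanish in $H^*(B; R)$. So suppose $\secat(p) = n$ with $n < k$, and fix an open cover $\{U_0, U_1, \dots, U_n\}$ of $B$ together with continuous sections $s_i : U_i \to E$ of $p$, as provided by the definition of sectional category.

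The first step is the observation that each class in $\ker p^*$ restricts to zero on every $U_i$. Let $j_i : U_i \hookrightarrow B$ denote the inclusion; since $p \circ s_i = \Id_{U_i}$, for any $u \in \ker[p^* : H^*(B; R) \to H^*(E; R)]$ we have $j_i^* u = s_i^* p^* u = s_i^*(0) = 0$. Consequently, by the long exact cohomology sequence of the pair $(B, U_i)$, the class $u$ lifts to a relative class $\bar u \in H^*(B, U_i; R)$ mapping to $u$ under the natural homomorphism $H^*(B, U_i; R) \to H^*(B; R)$.

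The second step uses the relative cup product. Because the sets $U_i$ are open, the family $(B; U_0, \dots, U_n)$ is excisive, and the relative cup product furnishes a well-defined pairing
$$H^*(B, U_0; R) \otimes \cdots \otimes H^*(B, U_n; R) \longrightarrow H^*\bigl(B,\, U_0 \cup \cdots \cup U_n; R\bigr),$$
compatible with the maps to absolute cohomology. Since $n + 1 \le k$, I may lift $u_1, \dots, u_{n+1}$ to relative classes over $U_0, \dots, U_n$ respectively and form their relative product; this lands in $H^*(B,\, U_0 \cup \cdots \cup U_n; R) = H^*(B, B; R) = 0$, because $\{U_i\}$ covers $B$. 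Mapping back to absolute cohomology shows $u_1 \cup \cdots \cup u_{n+1} = 0$, and hence $u_1 \cup \cdots \cup u_k = 0$ as well.

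This contradicts the hypothesis $u_1 \cup \cdots \cup u_k \ne 0$, so necessarily $n \ge k$, that is, $\secat(p) \ge k$. The only delicate point is the second step: one must verify that the relative cup product is defined and compatible with restriction to absolute classes, which is precisely where the openness of the cover is invoked to guarantee that excision applies. I expect this to be the main technical hurdle to formulate cleanly, whereas the vanishing $j_i^* u = 0$ in the first step is immediate from the section property $p \circ s_i = \Id_{U_i}$.
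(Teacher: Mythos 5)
Your argument is correct and complete: it is the classical relative cup-product proof of Schwarz's cup-length bound (restriction of each $u_i$ to $U_i$ vanishes via the section, lift to $H^*(B,U_i;R)$, multiply into $H^*(B,U_0\cup\cdots\cup U_n;R)=H^*(B,B;R)=0$). The paper itself offers no proof of this lemma, citing it as a classical result of A.~S.~Schwarz, and your argument is precisely the standard one from that source; the only cosmetic slip is writing $p\circ s_i=\Id_{U_i}$ where the composite is really the inclusion $U_i\hookrightarrow B$, which does not affect the computation $j_i^*u=s_i^*p^*u=0$.
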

	
	The following Proposition gives a simple and powerful lower bound for sequential parametrized topological complexity. 
	
\begin{prop}\label{lemma lower bound for para tc}
For a fibration $p: E \to B$, consider the diagonal map $\Delta : E \to E^r_B$ where $\Delta(e)= (e, e, \cdots, e)$, and the induced by $\Delta$ homomorphism in cohomology $\Delta^\ast: H^\ast(E^r_B;R) \to H^\ast(E;R)$ with coefficients in a ring $R$. 
If there exist cohomology classes $$u_1, \cdots, u_k \in \ker[\Delta^* : H^*(E_B^r; R) \to H^*(E; R)]$$ 
such that 
$$u_1 \cup \cdots \cup u_k \neq 0 \in H^*(E_B^r; R)$$ then $\TC_{r}[p: E \to B]\geq k$. 
\end{prop}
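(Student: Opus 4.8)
The plan is to deduce this directly from the Schwarz criterion, Lemma \ref{lemma lower bound of secat}, by relating the diagonal map $\Delta: E\to E^r_B$ to the fibration $\Pi_r: E^I_B\to E^r_B$. The key observation is that the map $\Pi_r$ admits a natural ``homotopy section'' over the diagonal, or more precisely, that $\Delta$ factors up to homotopy through the total space $E^I_B$. Indeed, consider the map $c: E\to E^I_B$ sending a point $e\in E$ to the constant path at $e$ (this lands in $E^I_B$ since the constant path projects to a constant path in $B$). Then $\Pi_r\circ c: E\to E^r_B$ sends $e$ to $(e, e, \dots, e)$, which is exactly $\Delta$. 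Hence $\Delta = \Pi_r\circ c$ and therefore on cohomology $\Delta^* = c^*\circ \Pi_r^*$.

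First I would record this factorisation $\Delta^* = c^*\circ\Pi_r^*$. The immediate consequence is an inclusion of kernels: if $u\in \ker\Delta^*$, I want to relate this to $\ker\Pi_r^*$. The cleanest route is to show that $\Pi_r: E^I_B\to E^r_B$ and the diagonal $\Delta: E\to E^r_B$ induce the \emph{same} kernel in cohomology, i.e.\ $\ker[\Pi_r^*: H^*(E^r_B;R)\to H^*(E^I_B;R)] = \ker[\Delta^*: H^*(E^r_B;R)\to H^*(E;R)]$. Since $\Delta = \Pi_r\circ c$ gives $\ker\Pi_r^*\subseteq \ker\Delta^*$, it suffices to prove the reverse inclusion, and this follows if $c: E\to E^I_B$ is a homotopy equivalence. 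That is the crux of the argument.

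The main obstacle, then, is verifying that the inclusion of constant paths $c: E\to E^I_B$ is a homotopy equivalence. This I would establish by exhibiting an explicit deformation retraction of $E^I_B$ onto the constant paths: define $H: E^I_B\times I\to E^I_B$ by reparametrising each path $\alpha$ toward its value $\alpha(t_1)$ (or $\alpha(0)$), via $H(\alpha, s)(t) = \alpha((1-s)t + s t_1)$ say, shrinking the path to the constant path at $\alpha(t_1)$ as $s\to 1$. One must check $H$ is well defined (each $H(\alpha,s)$ still projects to a constant path in $B$, which is clear since $p\circ\alpha$ is already constant), continuous in the compact-open topology, and that $H(\cdot, 0) = \Id$ while $H(\cdot, 1)$ lands in the image of $c$ with $c\circ (\text{retraction})\simeq \Id$. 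This shows $E^I_B\simeq E$ compatibly with the maps to $E^r_B$, giving the kernel equality.

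Once the kernel equality is in place, the conclusion is immediate: the hypothesis provides classes $u_1,\dots, u_k\in \ker\Delta^* = \ker\Pi_r^*$ with $u_1\cup\cdots\cup u_k\neq 0$ in $H^*(E^r_B;R)$. Applying Lemma \ref{lemma lower bound of secat} to the fibration $\Pi_r$ with base $E^r_B$, total space $E^I_B$, and these $k$ classes in the kernel whose product is nonzero, we obtain $\secat(\Pi_r)\geq k$, which by Definition \ref{def:main} is exactly $\TC_r[p: E\to B]\geq k$, completing the proof.
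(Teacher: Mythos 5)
Your proposal is correct and follows essentially the same route as the paper: the paper also introduces the constant-path map $c: E \to E^I_B$, observes that $\Pi_r\circ c = \Delta$ and that $c$ is a homotopy equivalence, deduces $\ker\Pi_r^* = \ker\Delta^*$, and then invokes Lemma \ref{lemma lower bound of secat}. The only difference is that you supply the explicit deformation retraction witnessing that $c$ is a homotopy equivalence, which the paper simply asserts.
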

\begin{proof}
Define the map $c : E \to E_B^I$ where $c(e)(t) = e$ is the constant path. Note that the map $c : E \to E_B^I$ is a homotopy equivalence. Besides, the following diagram commutes 
	$$
	\xymatrix{
		E \ar[rr]^{c} \ar[dr]_{\Delta}&&E_B^I \ar[dl]^{\Pi_r}\\
		& E_B^r & &
	}
	$$
and thus, $\ker[\Pi_r^*: H^*(E_B^r; R) \to H^*(E_B^I; R)] = \ker[\Delta^*: H^*(E_B^r; R) \to H^*(E; R)]$. The result now follows from Lemma \ref{lemma lower bound of secat} and from the definition $\TC_r[p:E\to B]=\secat(\Pi_r)$. 
\end{proof}

\section{Cohomology algebras of certain configuration spaces}

In this section we present auxiliary results about cohomology algebras of relevant configuration spaces which will be used later in this paper for computing the sequential parametrized topological complexity of the Fadell - Neuwirth fibration. 

All cohomology groups will be understood as having the integers as coefficients although the symbol $\Bbb Z$ will be skipped from the notations.

We start with the following well-known fact, see \cite[Chapter V, Theorem 4.2]{FadH01}:

\begin{lemma}\label{lemma cohomology of configuration space}
The integral cohomology ring $H^\ast(F(\rr^d, m+n))$ contains $(d-1)$-dimensional cohomology classes 
$\omega_{ij}$, where $1\leq i< j\leq m+n,$ 
which multiplicatively generate $H^*(F(\rr^d, m+n))$ and satisfy the following defining relations 
$$(\omega_{ij})^2=0 \quad \mbox{and}\quad 
 \omega_{ip}\omega_{jp}= \omega_{ij}(\omega_{jp}-\omega_{ip})\quad \text{for all }\,  i<j<p.$$
\end{lemma}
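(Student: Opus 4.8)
The plan is to establish this purely algebraic-topological statement about the configuration space $F(\rr^d, m+n)$ by the standard fibration-and-induction method that goes back to Fadell and Neuwirth. Recall that $F(\rr^d, N)$ denotes the space of ordered $N$-tuples of distinct points in $\rr^d$, and I abbreviate $N = m+n$. The $\omega_{ij}$ are the pullbacks of the fundamental class of $S^{d-1}$ under the maps $F(\rr^d, N) \to S^{d-1}$ sending $(x_1, \dots, x_N) \mapsto (x_i - x_j)/|x_i - x_j|$, so that $\omega_{ij}$ lives in degree $d-1$ and the relation $(\omega_{ij})^2 = 0$ is immediate from $\omega_{ij}$ being pulled back from a sphere (its square factors through $H^{2(d-1)}(S^{d-1}) = 0$). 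The symmetry $\omega_{ji} = (-1)^d \omega_{ij}$ also follows from the antipodal map on $S^{d-1}$, though it is only the relations as stated that I must verify.

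First I would set up the Fadell--Neuwirth fibration $F(\rr^d, N) \to F(\rr^d, N-1)$ obtained by forgetting the last point, whose fibre is $\rr^d$ minus $N-1$ points, homotopy equivalent to a wedge of $N-1$ spheres $S^{d-1}$. The key structural input is that this fibration admits a section (send a configuration of $N-1$ points to a configuration with an $N$-th point placed far away in a canonical direction), so the Leray--Hirsch theorem applies: the cohomology of the total space is a free module over the cohomology of the base, with basis given by $1$ together with the classes $\omega_{iN}$ for $1 \le i < N$ restricting to the generators of the fibre. This is exactly the computation the excerpt says will be carried out in full detail, and it is the engine of the whole argument. Iterating the fibration $F(\rr^d, N) \to F(\rr^d, N-1) \to \dots \to F(\rr^d, 2) \simeq S^{d-1}$ shows by induction on $N$ that the $\omega_{ij}$ multiplicatively generate $H^*(F(\rr^d, N))$ and, tracking the additive bases through the Leray--Hirsch splitting, that the classes obtained by taking products of the $\omega_{ij}$ in an appropriate normal form constitute a free additive basis; this simultaneously gives generation and confirms that no further relations beyond the stated ones can hold.

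The second half is to verify the three-term relation $\omega_{ip}\omega_{jp} = \omega_{ij}(\omega_{jp} - \omega_{ip})$ for $i < j < p$, which is the genuinely geometric identity often called the Arnold or infinitesimal-braid relation. I would prove it by restricting attention to the three coordinates indexed by $i, j, p$: the projection $F(\rr^d, N) \to F(\rr^d, 3)$ forgetting all other points pulls the relation back from $F(\rr^d, 3)$, so it suffices to check it there. On $F(\rr^d, 3)$ the relation can be established by a direct cohomological computation using the fibration $F(\rr^d, 3) \to F(\rr^d, 2) \simeq S^{d-1}$ with fibre $\rr^d \setminus \{\text{two points}\} \simeq S^{d-1} \vee S^{d-1}$; here $H^*$ has the three generators $\omega_{ij}, \omega_{ip}, \omega_{jp}$ in a low enough range that the single quadratic relation among the three products is forced by rank-counting through Leray--Hirsch, and then pinned down by evaluating on the cofibre or by a direct geometric argument comparing the two ways an $S^{d-1}$-factor can degenerate.

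The step I expect to be the main obstacle is keeping the Leray--Hirsch bookkeeping honest: I must choose a consistent normal form for monomials in the $\omega_{ij}$ (for instance, products $\omega_{i_1 j_1}\cdots \omega_{i_s j_s}$ with strictly increasing second indices $j_1 < \dots < j_s$ and $i_t < j_t$), show that the defining relations suffice to rewrite an arbitrary monomial into this form, and then verify that the resulting normal-form monomials map bijectively onto the free Leray--Hirsch basis so that they are linearly independent. Getting the sign conventions right (the $(-1)^d$ appearing for odd versus even $d$ in the graded-commutativity and in $\omega_{ji}$ versus $\omega_{ij}$) and confirming that the count of normal-form monomials matches the known Poincaré polynomial $\prod_{k=1}^{N-1}(1 + k t^{d-1})$ of $F(\rr^d, N)$ is the delicate part; once the basis count matches, both generation and the completeness of the relation set follow at once.
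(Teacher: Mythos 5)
Your proposal is correct in outline, but it is worth noting that the paper does not prove this statement at all: it is quoted as a well-known fact with a citation to Fadell--Husseini (Chapter V, Theorem 4.2), and the paper only supplies the geometric definition of the classes $\omega_{ij}$ as pullbacks $\phi_{ij}^\ast(v)$ of the fundamental class of $S^{d-1}$. What you have written is essentially a reconstruction of the standard proof from the cited literature: the Gauss-map definition (which matches the paper's), the vanishing of $\omega_{ij}^2$ because $v^2=0$ in $H^\ast(S^{d-1})$, the tower of Fadell--Neuwirth fibrations with fibres homotopy equivalent to wedges of $(d-1)$-spheres, Leray--Hirsch applied at each stage, and the reduction of the three-term relation to $F(\rr^d,3)$ by naturality under the forgetful projection. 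All of this is sound, and the normal-form bookkeeping against the Poincar\'e polynomial $\prod_{k=1}^{N-1}(1+kt^{d-1})$ is exactly how one confirms both generation and completeness of the relations. Two small caveats. First, the section of the forgetful fibration is not what makes Leray--Hirsch applicable; what matters is that the global classes $\omega_{iN}$ restrict to a free basis of $H^{d-1}$ of the fibre, which you also state, so the section is a harmless redundancy. Second, and more substantively, the step you describe as ``forced by rank-counting \dots and then pinned down by evaluating on the cofibre'' is the real content of the Arnold relation: rank-counting in $H^{2(d-1)}(F(\rr^d,3))$ only shows that $\omega_{13}\omega_{23}$ is \emph{some} integral combination $a\,\omega_{12}\omega_{13}+b\,\omega_{12}\omega_{23}$; determining $a=-1$, $b=1$ requires evaluating against explicit homology classes (the standard products of spheres obtained by letting one point orbit another), and your sketch only gestures at this. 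Since the statement is a classical theorem that the paper itself delegates to a reference, this level of detail is acceptable, but that coefficient computation is the one step you could not omit in a self-contained write-up.
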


The cohomology class $\omega_{ij}$ arises as follows. For $1\le i<j\le m+n$, mapping a configuration 
$(u_1, \dots, u_{m+n})\in F(\rr^d, m+n)$ to the unit vector 
$$\frac{u_i-u_j}{||u_i-u_j||}\, \in\,  S^{d-1},$$
defines a continuous map 
$\phi_{ij}: F(\rr^d, m+n)\to S^{d-1}$, and the class $$\omega_{ij}\in H^{d-1}(F(\rr^d, m+n))$$ is defined by 
$\omega_{ij}=\phi^\ast_{ij}(v)$
where $v\in H^{d-1}(S^{d-1})$ is the fundamental class. 

%The space 
%
%\begin{proof} 
%Note that $F(\rr^d, 2)$ is homotopy equivalent to $d-1$ dimensional sphere $S^{d-1}$. Consider the projection maps $p_{ij}: E=F(\rr^d, m+n) \to F(\rr^d, 2),$ where $i<j,$ to $i$-th and $j$-th coordinate. We denote $p^*_{ij}(\alpha)=\omega_{ij}$ where $\alpha$ is fundamental cohomology class in $H^{d-1}(F(\rr^d, 2))=H^{d-1}(S^{d-1})$. The required relation satisfies by \cite[Chapter V, Theorem 4.2]{FadH01}, as each cohomolgy class $\omega_{ij}$ is pullback of cohomology class of configuration space $F(\rr^d, 2)$.\end{proof}

Below we shall denote by $E$ the configuration space $E=F(\rr^d, n+m)$. A point of $E$ will be understood as a configuration 
$$(o_1, o_2, \dots, o_m, z_1, z_2, \dots, z_n)$$
where the first $m$ points $o_1, o_2, \dots, o_m$ represent \lq\lq obstacles\rq\rq\,  while the last $n$ points $z_1, z_2, \dots, z_n$ represent \lq\lq robots\rq\rq. The map 
\begin{eqnarray}\label{FN}
p: F(\rr^d, m+n) \to F(\rr^d, m),
\end{eqnarray}
where
$$ p(o_1, o_2, \dots, o_m, z_1, z_2, \dots, z_n)= (o_1, o_2, \dots, o_m),$$
is known as the Fadell - Neuwirth fibration. This map was introduced in \cite{FadN} where the authors showed that $p$ is a locally trivial fibration. The fibre of $p$ over a configuration $\mathcal O_m=\{o_1, \dots, o_m\}\in F(\rr^d, m)$
is the space $X=F(\rr^d- \mathcal O_m, n)$, the configuration space of $n$ pairwise distinct points lying in the complement of the set $\mathcal O_m=\{o_1, \dots, o_m\}$ of $m$ fixed obstacles.

We plan to use Lemma \ref{lemma lower bound for para tc} to obtain lower bounds for the topological complexity, and for this reason our first task will be to calculate the integral cohomology ring of the space 
$E_B^r$. Here $E$ denotes the space $E=F(\rr^d, m+n)$ and $B$ denotes the space $B=F(\rr^d, m)$ and $p: E\to B$ is the Fadell - Neuwirth fibration (\ref{FN}); hence $E^r_B$ is the space of $r$-tuples $(e_1, e_2, \dots, e_r)\in E^r$ satisfying 
$p(e_1)=p(e_2)=\dots= p(e_r)$. 
Explicitly, a point of the space $E_B^r$ can be viewed as a configuration
\begin{eqnarray}\label{confr}
(o_1, o_2, \cdots o_m, z^1_1, z^1_2, \cdots z^1_n, z^2_1, z^2_2, \cdots, z^2_n, \cdots, z^r_1, z^r_2, \cdots, z^r_n)
\end{eqnarray}
of $m+rn$ points %of $\rr^d$ where the points
$o_i, \, z^l_j\in \rr^d$ (for 
$i=1, 2, \dots, m$, $j=1, 2, \dots, n$ and $l=1, 2, \dots, r$), 
%$1\leq i \leq m, 1\leq j \leq n$ and $1\leq l \leq r$ 
such that
\begin{enumerate}
\item $o_i \neq o_{i'}$ for $i\neq i'$,
\item $o_i \neq z^l_j$ for $1\leq i \leq m, \, 1\leq j \leq n$ and $1\leq l \leq r$,
\item 
$z^l_j \neq z^l_{j'}$ for $j \neq j'$.
\end{enumerate}

The following statement is a generalisation of Proposition 9.2 from \cite{CohFW21}.
\begin{prop}\label{prop relation of cohomology classes}
The integral cohomology ring $H^*(E_B^r)$ contains cohomology classes 
$\omega^l_{ij}$ 
of degree $d-1$, where $1\leq i < j \leq m+n$ and $1\leq l \leq r$,
satisfying the relations 
\begin{enumerate}
\item[{\rm (a)}] $\omega_{ij}^l = \omega_{ij}^{l'} \text{ for } 1\leq i < j \leq m$  and $1\leq l \leq l' \leq r$,
\item[{\rm (b)}] $(\omega_{ij}^l)^2=0\text{ for } i < j \text{ and } 1\leq l \leq r$,
\item[{\rm (c)}] $\omega_{ip}^l\omega_{jp}^l= \omega_{ij}^l(\omega_{jp}^l-\omega_{ip}^l) \text{ for } i<j<p \text { and }\,  1\leq l \leq r.$
\end{enumerate}

\end{prop}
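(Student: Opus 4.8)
The plan is to construct the classes $\omega^l_{ij}$ by pulling back the generators supplied by Lemma \ref{lemma cohomology of configuration space} along $r$ natural \lq\lq slice\rq\rq\ projections, and then to read off all three relations from naturality of the cup product. For each $l\in\{1,\dots,r\}$ I would define a map
$q_l : E^r_B \to E = F(\rr^d, m+n)$ sending a configuration (\ref{confr}) to the single configuration
$(o_1, \dots, o_m, z^l_1, \dots, z^l_n)$, i.e. the obstacles together with the $l$-th copy of the robots. Conditions (1)--(3) in the description of $E^r_B$ guarantee that these $m+n$ points are pairwise distinct, so $q_l$ is a well-defined continuous map. I would then set $\omega^l_{ij} := q_l^*(\omega_{ij})$, where $\omega_{ij}\in H^{d-1}(F(\rr^d, m+n))$ are the generators of Lemma \ref{lemma cohomology of configuration space}.

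Relations (b) and (c) then follow immediately by naturality: since $(\omega_{ij})^2=0$ and $\omega_{ip}\omega_{jp}=\omega_{ij}(\omega_{jp}-\omega_{ip})$ hold in the ring $H^*(E)$, applying the ring homomorphism $q_l^*$ produces the corresponding identities among the classes $\omega^l_{ij}$ in $H^*(E^r_B)$.

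Relation (a) is the one point where the geometry of the fibration enters. Here I would use that each $q_l$ covers the Fadell--Neuwirth projection (\ref{FN}). Let $\pi_O : E^r_B \to B = F(\rr^d, m)$ be the map recording only the obstacles $(o_1,\dots,o_m)$; since all slices share the same obstacle positions, $p\circ q_l = \pi_O$ for every $l$, independently of $l$. On the other hand, for obstacle indices $1\le i<j\le m$ the direction map $\phi_{ij}: F(\rr^d,m+n)\to S^{d-1}$ depends only on $o_i$ and $o_j$, hence factors through $p$ as $\phi_{ij}=\psi_{ij}\circ p$, where $\psi_{ij}: F(\rr^d,m)\to S^{d-1}$ is the analogous direction map on the base. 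Consequently
\[
\omega^l_{ij} = q_l^*\phi_{ij}^*(v) = (p\circ q_l)^*\psi_{ij}^*(v) = \pi_O^*\psi_{ij}^*(v),
\]
which is manifestly independent of $l$, establishing relation (a).

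Because the whole construction is natural, I do not anticipate a genuine obstacle inside this proposition; the only steps requiring care are the well-definedness of the slice maps $q_l$ (verifying that distinctness is preserved on each slice) and the factorization $\phi_{ij}=\psi_{ij}\circ p$ for obstacle indices. The substantive difficulty lies elsewhere and is not asserted here: showing that the $\omega^l_{ij}$ actually \emph{generate} $H^*(E^r_B)$ and that (a)--(c) form a \emph{complete} set of defining relations is a separate computation, to be carried out later by means of the Leray--Hirsch theorem.
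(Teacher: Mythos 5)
Your proposal is correct and follows essentially the same route as the paper: the paper defines the identical slice projections $q_l$, sets $\omega^l_{ij}=q_l^*(\omega_{ij})$, and notes that (a)--(c) follow at once from naturality (the paper leaves the factorization of $\phi_{ij}$ through $p$ for obstacle indices implicit, which you spell out). No gap.
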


\begin{proof} For $1\le l\le r$, 
 consider the 
 projection map $q_l : E_B^r \to E$ which acts as follows: the configuration (\ref{confr}) is mapped into 
 $$(u_1, u_2, \dots, u_{m+n})\in E=F(\rr^d, m+n)$$
 where 
 $$
 u_i=\left\{
 \begin{array}{lll}
 o_i& \mbox{for} & i\le m,\\ 
 z^l_{i-m}& \mbox{for} & i> m.
 \end{array}
 \right.
 $$
 Using Lemma \ref{lemma cohomology of configuration space} and the cohomology classes 
 $\omega_{ij}\in H^{d-1}(E)$, we define $$(q_{l})^*(\omega_{ij})=\omega^l_{ij}\, \in \, H^*(E_B^r).$$
 Relations {\rm {(a), \, (b), \, (c)}} are obviously satisfied. This completes the proof. 
\end{proof}

For $1\leq i < j \leq m$ we shall denote the class $\omega_{ij}^l\in H^{d-1}(E^r_B)$ simply by $\omega_{ij}$; this is justified because of the relation {\rm {(a)}} above. 

We shall introduce notations for the classes which arise as the cup-products of the classes $\omega^l_{ij}$. For $p\ge 1$
consider two sequences of integers 
$I=(i_1, i_2, \cdots, i_p)$ and $J=(j_1, j_2, \cdots, j_p)$ where $i_s, j_s\in \{1, 2, \dots, m+n\}$ for $s=1, 2, \dots, p$.  
We shall say that the sequence $J$ is {\it increasing} if either $p=1$ or $j_1<j_2<\dots <j_p$. Besides, we shall write $I<J$ if $i_s<j_s$ for all $s=1, 2, \dots, p$. 

A pair of sequences $I<J$ of length $p$ as above determines the cohomology class
$$\omega_{IJ}^l=\omega^l_{i_1j_1}\omega^l_{i_2j_2}\cdots \omega^l_{i_pj_p}\in H^{(d-1)p}(E_B^r)$$ 
for any $l=1, 2, \dots, r$. Note that the order of the factors is important in the case when the dimension $d$ is even. 
Because of the property {\rm {(a)}} of Proposition \ref{prop relation of cohomology classes}
this class is independent of $l$ assuming that $j_p\le m$; for this reason, if $j_p\le m$, 
we shall denote $\omega^l_{IJ}$ simply by $\omega_{IJ}$.

For formal reasons we shall allow $p=0$. In this case the symbol $\omega^l_{IJ}$ will denote the unit $1\in H^0(E^r_B)$. 

The next result is a generalisation of \cite[Proposition 9.3]{CohFW21} where the case $r=2$ was studied. 
%
%\vskip 1cm
%
% $\omega_{ij}^l = \omega_{ij}^{l'}$   we shall denote the class $\omega_{ij}^l $
%
%, so for each $l$ we may denote the $\omega^l_{ij}$ by $\omega_{ij}$  for  $1\leq i < j \leq m$. Now we generalised the \cite[Proposition 9.3]{CohFW21}. 
%
%Consider the sequences $I=(i_1, i_2, \cdots, i_p)$ and $J=(j_1, j_2, \cdots, j_p)$. 
%
%Here we assume that $1\leq i_s<j_s\leq m+n$ for each $s$. In this case we write $I<J$. We denote $$\omega_{IJ}=\omega_{i_1j_1}\omega_{i_2j_2}\cdots \omega_{i_pj_p}\in H^{(d-1)p}(E_B^r)$$
%when  $J$ takes valus in $\{2, 3, \cdots, m\}$, and for $1\leq l \leq r$, 
%
%$$\omega_{IJ}^l=\omega^l_{i_1j_1}\omega^l_{i_2j_2}\cdots \omega^l_{i_pj_p}\in H^{(d-1)p}(E_B^r)$$ 
%
%when $J$ takes valus in $\{m+1, m+2, \cdots, m+n\}$.
%
%\vskip 1cm

\begin{prop}\label{prop basic cohomology classes}
An additive basis of $ H^*(E_B^r)$ is formed by the following set of cohomology classes 
\begin{eqnarray}\label{basis}
\omega_{IJ}\omega^1_{I_1J_1}\omega^2_{I_2J_2}\cdots \omega^r_{I_rJ_r}\, \in\,  H^\ast(E^r_B)\quad \mbox{with}\quad I<J\quad \mbox{and}\quad I_i<J_i,\end{eqnarray}
where: 
\begin{enumerate}
\item[{\rm (i)}] the sequences $J, J_1, J_2, \cdots J_r$ are increasing,
\item[{\rm (ii)}] the sequence $J$ takes values in $\{2, 3, \cdots, m\}$,
\item[{\rm (iii)}] the sequences $J_1, J_2, \dots, J_r$ take values in $\{m+1, \cdots m+n\}$.
\end{enumerate}
\end{prop}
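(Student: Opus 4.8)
The plan is to apply the Leray--Hirsch theorem to the locally trivial fibration $\pi : E_B^r \to B = F(\rr^d, m)$ whose fibre is $X^r = F(\rr^d - \mathcal{O}_m, n)^r$, and then to combine the resulting module basis with the standard additive basis of $H^*(B)$. First I would recall the two cohomology inputs. For the base $B = F(\rr^d, m)$, the classical computation (going back to Arnold, Cohen, Fadell--Neuwirth) gives that $H^*(B)$ is additively free with basis the monomials $\omega_{IJ}$ where $J$ is increasing with values in $\{2, \dots, m\}$ and $I < J$; these are exactly the classes singled out by condition (ii). For the fibre $X = F(\rr^d - \mathcal{O}_m, n)$, I would use the iterated Fadell--Neuwirth fibration that adds robots one at a time, each fibre being $\rr^d$ with $m$ obstacles and the earlier robots removed, hence homotopy equivalent to a wedge of $(d-1)$-spheres. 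This yields that $H^*(X)$ is additively free with basis the monomials $\omega_{I'J'}$ where $J'$ is increasing with values in $\{m+1, \dots, m+n\}$ and $I' < J'$, now allowing $I'$ to carry obstacle indices. Since all groups in sight are free abelian, the Künneth theorem gives $H^*(X^r) = \bigotimes_{l=1}^r H^*(X)$ with the tensor-product basis formed by the classes $\bigotimes_l \omega_{I_l J_l}$.

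The key step is to verify the Leray--Hirsch hypothesis: that the products $\omega^1_{I_1 J_1}\cdots \omega^r_{I_r J_r} \in H^*(E_B^r)$, with each $J_l$ increasing in $\{m+1, \dots, m+n\}$, restrict on every fibre to this tensor-product basis of $H^*(X^r)$. For this I would observe that the restriction of the projection $q_l : E_B^r \to E$ to a fibre $X^r$ is the $l$-th coordinate projection $X^r \to X$ followed by the inclusion $X \hookrightarrow E = F(\rr^d, m+n)$ that freezes the obstacles. Under this inclusion a generator $\omega_{ij}$ from Lemma \ref{lemma cohomology of configuration space} with $j > m$ pulls back to the corresponding generator of $H^*(X)$, while $\omega_{ij}$ with $j \le m$ restricts to zero (the obstacles are constant on the fibre). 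Consequently $\omega^l_{I_l J_l}$ restricts to $1 \otimes \cdots \otimes \omega_{I_l J_l} \otimes \cdots \otimes 1$ in the $l$-th slot, and a product over $l$ restricts to $\bigotimes_l \omega_{I_l J_l}$. Thus the chosen classes restrict precisely to the Künneth basis, which is exactly what Leray--Hirsch requires.

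With Leray--Hirsch in hand, $H^*(E_B^r)$ is a free $H^*(B)$-module with basis $\{\omega^1_{I_1 J_1}\cdots \omega^r_{I_r J_r}\}$. Multiplying each such generator by the basis $\{\omega_{IJ}\}$ of $H^*(B)$, pulled back via $\pi^* = q_l^* \circ p^*$ and noting that $\pi^*(\omega_{ij}) = \omega_{ij}$ for obstacle classes by relation (a) of Proposition \ref{prop relation of cohomology classes}, yields the claimed additive basis \eqref{basis}, with the index conditions (i)--(iii) automatically encoded. The increasing conditions on $J, J_1, \dots, J_r$ and the disjoint value ranges $\{2,\dots,m\}$ versus $\{m+1,\dots,m+n\}$ are precisely what makes the monomials distinct and the module product free.

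The step I expect to be the main obstacle is the fibre computation combined with the slot-by-slot restriction check: one must pin down the exact basis of $H^*(F(\rr^d - \mathcal{O}_m, n))$ and then confirm that the algebraically defined classes $\omega^l_{ij}$ restrict correctly, in particular that classes coming from distinct copies $l \ne l'$ land in distinct tensor factors and are therefore independent on the fibre. An equivalent and perhaps more economical route avoids the Künneth bookkeeping by inducting on $r$: the forgetful map $E_B^r \to E_B^{r-1}$ dropping the last robot copy is a locally trivial fibration with fibre $X = F(\rr^d - \mathcal{O}_m, n)$, to which Leray--Hirsch applies with the single family $\{\omega^r_{I_r J_r}\}$, while the inductive hypothesis supplies the basis of $H^*(E_B^{r-1})$ and the base case $r = 0$ is just $H^*(B)$.
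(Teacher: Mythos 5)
Your proposal is correct and follows essentially the same route as the paper: Leray--Hirsch applied to the fibration $E_B^r\to B$ with fibre $X^r$, using the Fadell--Husseini basis of $H^*(B)$ for the classes $\omega_{IJ}$ with $j\le m$ and the K\"unneth basis of $H^*(X^r)$ for the classes $\omega^l_{I_lJ_l}$. Your explicit verification that $q_l$ restricts on a fibre to the $l$-th coordinate projection (so the chosen classes restrict to the tensor-product basis) spells out a step the paper leaves to the cited references, but it is the same argument.
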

\begin{proof} Recall our notations: $E=F(\rr^d, m+n)$, \,  $B=F(\rr^d, m)$ and $p: E\to B$ is the Fadell - Neuwirth fibration
(\ref{FN}). Consider the fibration 
$$p_r: E^r_B \to B \quad \mbox{where}\quad p_r(e_1, \dots, e_r) =p(e_1)=\dots=p(e_r).$$ 
Its fibre over a configuration $\mathcal O_m=(o_1, \dots, o_m)\in B$ 
is $X^r$, the Cartesian product of $r$ copies of the space $X$, where
$X=F(\rr^d-\mathcal O_m, n)$. 

We shall apply Leray-Hirsch theorem to the fibration $p_r: E^r_B \to B$. The classes $\omega_{ij}$
with $i < j \leq  m$ originate from the base of this fibration. Moreover, from Lemma \ref{lemma cohomology of configuration space} it follows that a free additive basis of $H^\ast(B)$ forms the set of the classes $\omega_{IJ}$ where $I<J$ run over all sequences of elements of the set  $\{ 1,2,...,m\} $ such that the sequence $J = (j_1,j_2,...,j_p)$ is increasing.

Next consider the classes of the form
$$\omega^1_{I_1J_1}\omega^2_{I_2J_2}\cdots \omega^r_{I_rJ_r}\, \in\,  H^\ast(E^r_B),$$
with increasing sequences $J_1, J_2, \dots, J_r$ satisfying {\rm {(iii)}} above. 
Using the known
results about the cohomology algebras of configuration spaces (see \cite{FadH01}, Chapter V, Theorems
4.2 and 4.3) as well as the Ku\"nneth theorem, we see that the restrictions of the family of these 
classes onto the fiber $X^r$ form a free basis in the cohomology of the fiber $H^\ast (X^r).$

Hence, Leray-Hirsch theorem \cite{Hat} is applicable and we obtain that a free basis of the cohomology 
$H^\ast (E^r_B)$ is given by the set of classes described in the statement of Proposition \ref{prop basic cohomology classes}.
This completes the proof. 
\end{proof}

Proposition \ref{prop basic cohomology classes} implies:

\begin{corollary}\label{new}
Consider two basis elements $\alpha, \beta \in H^*(E_B^r)$
$$
\alpha = \omega_{IJ}\omega^1_{I_1J_1}\omega^2_{I_2J_2}\cdots \omega^r_{I_rJ_r}\,\quad \mbox{and}\quad
\beta = \omega_{I'J'}\omega^1_{I'_1J'_1}\omega^2_{I'_2J'_2}\cdots \omega^r_{I'_rJ'_r},
$$ 
satisfying the properties {\rm (i), (ii), (iii)} of Proposition \ref{prop basic cohomology classes}. 
The product $$\alpha\cdot \beta\in H^\ast(E^r_B)$$ is another basis element up to sign (and hence is nonzero) if 
the sequences $J$ and $J'$ are disjoint and for every $k=1, 2, \dots,r$ the sequences $J_k$ and $J'_k$ are disjoint. 
\end{corollary}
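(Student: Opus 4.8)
The plan is to prove that $\alpha\cdot\beta$ is, up to sign, one of the basis elements listed in Proposition \ref{prop basic cohomology classes}, by directly assembling the product and verifying that it meets conditions (i)--(iii). First I would combine the factors of $\alpha$ and $\beta$ grouped by their upper index: the base-originating factors $\omega_{IJ}$ and $\omega_{I'J'}$ multiply together, and for each $l=1,\dots,r$ the fibre factors $\omega^l_{I_lJ_l}$ and $\omega^l_{I'_lJ'_l}$ multiply together. Since cohomology is graded-commutative and each $\omega^l_{ij}$ has degree $d-1$, rearranging the factors into the standard order prescribed by the basis costs only a sign, which is harmless for the nonvanishing claim. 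So the real content is to show that after this rearrangement we obtain a product of the form $\omega_{\tilde I\tilde J}\,\omega^1_{\tilde I_1\tilde J_1}\cdots\omega^r_{\tilde I_r\tilde J_r}$ in which each upper sequence is again increasing and takes values in the correct range.

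The key step is the following observation. Conditions (ii) and (iii) guarantee that the ranges $\{2,\dots,m\}$ and $\{m+1,\dots,m+n\}$ are already respected by each factor, and these ranges are preserved under concatenation, so after multiplying I only need the concatenated upper sequences to be \emph{increasing}. This is exactly where the disjointness hypotheses enter. For the base factor, $J$ and $J'$ are each increasing (being basis elements) and are assumed disjoint; for a pair of finite increasing sequences with disjoint value sets, their concatenation can be reordered into a single strictly increasing sequence, and reordering the lower entries $I,I'$ correspondingly preserves the relation $I<J$ termwise because $\omega^l_{ij}$ is symmetric enough that one may sort the column pairs $(i_s,j_s)$ by their $j$-coordinate. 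The same argument applies verbatim to each fibre index $l$ using the hypothesis that $J_l$ and $J'_l$ are disjoint. Thus each new upper sequence $\tilde J,\tilde J_1,\dots,\tilde J_r$ is strictly increasing with entries in the prescribed range, so the resulting monomial is precisely one of the basis elements of Proposition \ref{prop basic cohomology classes}, hence nonzero.

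The main obstacle I anticipate is justifying the reordering of the products $\omega^l_{I_lJ_l}\cdot\omega^l_{I'_lJ'_l}$ into the canonical increasing form without secretly using the defining relations (b) and (c), which could in principle alter the monomial. The point to check carefully is that when the merged $j$-values are all distinct, no two factors share a common index $p$ that would trigger relation (c), and no factor is repeated (which would trigger relation (b), $(\omega^l_{ij})^2=0$); the disjointness of $J_l$ and $J'_l$ is precisely what rules these degeneracies out, so the only manipulation needed is graded-commutative transposition of distinct generators, contributing a sign $\pm1$. Once this is established, multiplicative closure of the basis together with the sign bookkeeping finishes the proof, and I would conclude by remarking that the nonvanishing is immediate since a basis element is by definition nonzero.
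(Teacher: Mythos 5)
Your argument is correct and is exactly the reasoning the paper intends (the paper states this corollary as an immediate consequence of Proposition \ref{prop basic cohomology classes} without writing out a proof): disjointness of the second-index sequences means the merged monomial, after a graded-commutative reordering costing only a sign, again has strictly increasing $J$-sequences in the prescribed ranges with $I<J$ preserved, so it is literally one of the listed basis elements and hence nonzero. No gap.
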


There is a one-to-one correspondence between increasing sequences and subsets; this explains the meaning of the term "disjoint" applied to two increasing sequences.

Next we consider the situation when the product of basis elements is not a basis element but rather a linear combination of basis elements. 

Let $J=(j_1, j_2, \dots, j_p)$ be an increasing sequence of positive integers, where $p\ge 2$,
and let  $j$ be an integer satisfying $j_p<j$. Our goal is to represent the product
$$
\omega^l_{j_1 j}\omega^l_{j_2 j}\dots \omega^l_{j_p j} \in H^{p(d-1)}(E^r_B), \quad l=1, \dots, r,
$$
as a linear combination of the basis elements of Proposition \ref{prop basic cohomology classes}. 

We say that a sequence 
$I=(i_1, i_2, \dots, i_p)$ with $p\ge 2$ is a {\it $J$-modification} if $i_1=j_1$ and for $s=2, 3, \dots, p$ each number $i_s$  equals either $i_{s-1}$
or $j_s$. An increasing sequence of length $p$ has $2^{p-1}$ modifications. 
For example, for $p=3$ the sequence 
$J= (j_1, j_2, j_3)$ has the following $4$ modifications
\begin{eqnarray}\label{mod}
(j_1, j_2, j_3), \, \, (j_1, j_1, j_3), \, \, (j_1, j_2, j_2), \, \, (j_1, j_1, j_1).
\end{eqnarray}

For a $J$-modification $I$ we shall denote by $r(I)$ the number of repetitions in $I$. For instance, the numbers of repetitions of the modifications (\ref{mod}) are $0, \, 1, \, 1, \, 2$ correspondingly. 

The following statement is equivalent to Proposition 3.5 from \cite{CohFW}. Lemma 9.5 from \cite{CohFW21} 
gives the answer in a recurrent form. 

\begin{lemma}\label{lm:mod} For a sequence  $j_1<j_2<\dots < j_p<j$ of positive integers, where $p\ge 2$, denote 
$J=(j_1, j_2, \dots, j_p)$ and $J'=(j_2, j_3, \dots, j_p, j)$. 
In the cohomology algebra $H^\ast(E^r_B)$ associated to the Fadell - Neuwirth fibration, 
one has the following %$2^{p-1}$-term
relation
\begin{eqnarray}\label{dec}
\omega^l_{j_1 j}\omega^l_{j_2 j}\dots \omega^l_{j_p j} = \sum_I (-1)^{r(I)} \omega^l_{I J'},
\end{eqnarray} 
where $I$ runs over $2^{p-1}$ $J$-modifications and $l=1, 2, \dots, r.$
\end{lemma}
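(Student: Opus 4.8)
The plan is to argue by induction on the length $p$ of the sequence $J=(j_1,\dots,j_p)$, using only the defining relation (c) of Proposition~\ref{prop relation of cohomology classes}. For the base case $p=2$, relation (c) applied to the triple $j_1<j_2<j$ gives at once
$$\omega^l_{j_1 j}\omega^l_{j_2 j}=\omega^l_{j_1 j_2}(\omega^l_{j_2 j}-\omega^l_{j_1 j})=\omega^l_{j_1 j_2}\omega^l_{j_2 j}-\omega^l_{j_1 j_2}\omega^l_{j_1 j},$$
and the two summands are exactly $\omega^l_{IJ'}$ for the two $J$-modifications $I=(j_1,j_2)$, with $r(I)=0$, and $I=(j_1,j_1)$, with $r(I)=1$, where $J'=(j_2,j)$. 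This is the asserted identity for $p=2$.

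For the inductive step I would first isolate a length-reducing recurrence. Applying relation (c) to the first two factors and keeping the new class on the left --- so that no signs are produced even when $d$ is even and the classes anticommute --- yields
$$\omega^l_{j_1 j}\omega^l_{j_2 j}\cdots\omega^l_{j_p j}=\omega^l_{j_1 j_2}\Bigl[\,\omega^l_{j_2 j}\omega^l_{j_3 j}\cdots\omega^l_{j_p j}-\omega^l_{j_1 j}\omega^l_{j_3 j}\cdots\omega^l_{j_p j}\,\Bigr].$$
Each bracketed product is again of the type treated by the lemma, now for the length-$(p-1)$ increasing sequences $(j_2,j_3,\dots,j_p)$ and $(j_1,j_3,\dots,j_p)$ (both still bounded above by $j$), so the inductive hypothesis expands each as an alternating sum over their respective modifications.

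The remaining and most delicate step is to match the output of this recurrence with the sum over the $2^{p-1}$ modifications of $J$. I would partition the modifications $I=(i_1,\dots,i_p)$ of $J$ by the value $i_2\in\{i_1,j_2\}=\{j_1,j_2\}$. If $i_2=j_2$ there is no repetition at position $2$, the truncation $(i_2,\dots,i_p)$ is a modification of $(j_2,\dots,j_p)$ with the same repetition count, and, restoring the common left factor $\omega^l_{j_1 j_2}=\omega^l_{i_1 j_2}$, these terms assemble into $\omega^l_{j_1 j_2}$ times the expansion of $\omega^l_{j_2 j}\cdots\omega^l_{j_p j}$. If $i_2=j_1$ there is a repetition at position $2$, contributing an extra factor $-1$ to $(-1)^{r(I)}$, and $(i_2,\dots,i_p)$ is a modification of $(j_1,j_3,\dots,j_p)$; these terms give $-\omega^l_{j_1 j_2}$ times the expansion of $\omega^l_{j_1 j}\omega^l_{j_3 j}\cdots\omega^l_{j_p j}$. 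Summing the two cases reproduces precisely the bracketed expression in the recurrence, closing the induction. The hard part will be the sign bookkeeping in this case split, namely checking that the single repetition created at position $2$ accounts exactly for the minus sign in the recurrence; the device of always factoring $\omega^l_{j_1 j_2}$ out on the left is what makes the whole argument insensitive to the parity of $d$.
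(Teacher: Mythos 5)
Your argument is correct. It shares the paper's basic strategy --- induction on $p$ driven solely by relation (c) of Proposition \ref{prop relation of cohomology classes}, with identical base case --- but the recursion is set up at the opposite end of the product. The paper applies the inductive hypothesis once, to the truncated sequence $K=(j_1,\dots,j_p)$ with the same terminal index $j$, then multiplies the resulting expansion by the extra factor $\omega^l_{j_{p+1}j}$ and uses (c) on each pair $\omega^l_{i_pj}\omega^l_{j_{p+1}j}$ to show that every $K$-modification $I$ spawns exactly the two $J$-modifications $I_1=(i_1,\dots,i_p,j_{p+1})$ and $I_2=(i_1,\dots,i_p,i_p)$ extending it, with the sign matching $r(I_2)=r(I_1)+1$. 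You instead apply (c) once at the front, to $\omega^l_{j_1j}\omega^l_{j_2j}$, and invoke the inductive hypothesis twice, for the two length-$(p-1)$ sequences $(j_2,\dots,j_p)$ and $(j_1,j_3,\dots,j_p)$, matching terms by partitioning the $J$-modifications according to $i_2\in\{j_1,j_2\}$; your bijections and repetition counts ($r(I)=r'$ when $i_2=j_2$, $r(I)=r'+1$ when $i_2=j_1$, the latter absorbing the minus sign of the recurrence) are all correct, and factoring $\omega^l_{j_1j_2}$ out on the left does indeed keep the argument independent of the parity of $d$, exactly as the paper's placement of $\omega^l_{i_pj_{p+1}}$ does. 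The paper's version is marginally leaner (one application of the hypothesis, one two-element extension per modification); yours makes the $2^{p-1}$ count transparent as a binary split at each position. Either is a complete proof.
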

\begin{proof} First note that for any $J$-modification $I$ one has $I< J'$ and hence the terms in the RHS of (\ref{dec}) make sense. We shall use induction in $p$. For $p=2$ the statement of Lemma \ref{lm:mod} is 
$$
\omega^l_{j_1j}\omega^l_{j_2j}= \omega^l_{j_1j_2}\omega^l_{j_2j}-\omega^l_{j_1j_2}\omega^l_{j_1 j},
$$
which is the familiar $3$-term relation, see Proposition \ref{prop relation of cohomology classes}, statement (c).
The first term on the right corresponds to the sequence $I=(j_1,j_2)$ and the second term corresponds to 
$I=(j_1, j_1)$; the latter has one repetition and appears with the minus sign. 

Suppose now that Lemma \ref{lm:mod} is true for all sequences $J$ of length $p$. 
Consider an increasing sequence $J=(j_1, j_2, \dots, j_{p+1})$ of length $p+1$ and 
an integer $j$ satisfying $j>j_{p+1}$. Denote by $K=(j_1, j_2, \dots, j_p)$ the shortened sequence and let 
$I=(i_1, i_2, \dots, i_p)$ be a modification of $K$. As in (\ref{dec}), denote $K'=(j_2, j_3, \dots, j_p, j)$.
Consider the product
\begin{eqnarray*}
\omega^l_{I K'}\omega^l_{j_{p+1}j}&=&
\omega^l_{i_1j_2}\omega^l_{i_2j_3}\dots \omega^l_{i_{p-1}j_p} \omega^l_{i_p j}\cdot \omega^l_{j_{p+1}j}\\
&=& \left[\omega^l_{i_1j_2}\omega^l_{i_2j_3}\dots \omega^l_{i_{p-1}j_p} 
\right]\cdot \omega^l_{i_pj_{p+1}}\cdot \left[\omega^l_{j_{p+1}j} -\omega^l_{i_pj}\right]\\
&=& \omega^l_{I_1J'}-\omega^l_{I_2J'}
\end{eqnarray*}
where $I_1=(i_1, \dots, i_p, j_{p+1})$ and $I_2= (i_1, \dots, i_p, i_p)$  are the only two modifications of $J$ extending $I$. 
The equality of the second line is obtained by applying the relation (c) of Proposition \ref{prop relation of cohomology classes}.
Note that $r(I_1)=r(I)$ and $r(I_2)=r(I_1)+1$ which is consistent with the minus sign. 
Thus, we see that Lemma follows by induction. 
\end{proof}

Since each term in the RHS of (\ref{dec}) is a $\pm$ multiple of a basis element we obtain: 
 
\begin{corollary}\label{cor:form}
Any basis element (\ref{basis}) which appears with nonzero coefficient in the decomposition of the monomial  
\begin{eqnarray}\label{prodd}
\omega^l_{j_1 j}\omega^l_{j_2 j}\dots \omega^l_{j_p j}, \quad \mbox{where}\quad j_1<j_2<\dots < j_p< j,
\end{eqnarray}
contains a factor of the form $\omega^l_{j_s j}$, where $s\in \{1, 2, \dots, p\}$. Moreover, 
$$\omega^l_{j_1 j_2}\omega^l_{j_1 j_3}\dots \omega^l_{j_1 j_p}\omega^l_{j_1 j}$$ is the only basis element in the decomposition of (\ref{prodd}) which contains the factor 
$\omega^l_{j_1 j}$. 
\end{corollary}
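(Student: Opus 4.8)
The plan is to read both assertions directly off the explicit decomposition furnished by Lemma \ref{lm:mod}. That lemma expands the monomial (\ref{prodd}) as the signed sum $\sum_I (-1)^{r(I)} \omega^l_{IJ'}$, where $J'=(j_2,\dots,j_p,j)$ and $I$ ranges over the $2^{p-1}$ $J$-modifications. First I would note that, because $J'$ is fixed and increasing and each $J$-modification $I$ satisfies $I<J'$, the classes $\omega^l_{IJ'}$ are pairwise distinct basis elements of the type (\ref{basis}); hence no cancellation can occur, and each such basis element appears in the decomposition of (\ref{prodd}) with coefficient $\pm 1$. Thus the set of basis elements occurring with nonzero coefficient is precisely $\{\omega^l_{IJ'}\}_I$.

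For the first assertion I would examine the shape of a single term. By definition $\omega^l_{IJ'}=\omega^l_{i_1 j_2}\omega^l_{i_2 j_3}\cdots \omega^l_{i_{p-1} j_p}\,\omega^l_{i_p j}$, so its last factor is $\omega^l_{i_p j}$. Since $I=(i_1,\dots,i_p)$ is a $J$-modification, the value $i_p$ lies in $\{j_1,\dots,j_p\}$; writing $i_p=j_s$ shows that every term contains a factor $\omega^l_{j_s j}$ with $s\in\{1,\dots,p\}$, as required.

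For the second assertion the key observation is that the second indices of the factors of $\omega^l_{IJ'}$ are exactly the entries of $J'$, and the value $j$ occurs in $J'$ only once, as its last entry. Consequently the factor $\omega^l_{j_1 j}$ can appear only as the final factor $\omega^l_{i_p j}$, i.e. precisely when $i_p=j_1$. Here the combinatorics of modifications does the work: starting from $i_1=j_1$ and passing at each step from $i_{s-1}$ either to $i_{s-1}$ itself or to the strictly larger value $j_s$, once a modification leaves $j_1$ it can never return. Hence $i_p=j_1$ forces $i_s=j_1$ for all $s$, singling out the unique modification $I=(j_1,\dots,j_1)$, whose associated basis element is $\omega^l_{j_1 j_2}\omega^l_{j_1 j_3}\cdots\omega^l_{j_1 j_p}\,\omega^l_{j_1 j}$ (and which enters with coefficient $(-1)^{p-1}\neq 0$).

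I do not anticipate a genuine obstacle, since everything follows from the closed-form expansion in Lemma \ref{lm:mod}; the only points requiring a little care are verifying that the $\omega^l_{IJ'}$ are genuinely distinct basis elements (so that the signed sum involves no hidden cancellation and the coefficients are nonzero) and the monotonicity argument showing $i_p=j_1$ admits only the constant modification. Both are straightforward combinatorial checks rather than substantive difficulties.
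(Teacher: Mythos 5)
Your proposal is correct and follows essentially the same route as the paper, which derives the corollary directly from the explicit expansion of Lemma \ref{lm:mod} by observing that each term $\pm\,\omega^l_{IJ'}$ is a distinct basis element whose final factor is $\omega^l_{i_p j}$ with $i_p\in\{j_1,\dots,j_p\}$. Your additional checks (no cancellation among the $2^{p-1}$ distinct terms, and the monotonicity argument forcing $i_p=j_1$ only for the constant modification) merely make explicit what the paper leaves implicit.
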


Consider the diagonal map 
$$\Delta : E \to E_B^r, \quad \Delta(e) = (e, e, \dots, e), \quad e\in E.$$ 
\begin{lemma} \label{lm:ker}
The kernel of the homomorphism $\Delta^* : H^*(E_B^r) \to H^*(E)$ contains
the cohomology classes of the form 
$$\omega^l_{ij}-\omega^{l'}_{ij}.$$ 
\end{lemma}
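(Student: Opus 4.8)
The plan is to show directly that each class $\omega^l_{ij} - \omega^{l'}_{ij}$ pulls back to zero under $\Delta^*$, by tracing through the definitions of the maps involved. The key observation is that the diagonal map $\Delta : E \to E^r_B$ composed with the projection $q_l : E^r_B \to E$ (introduced in the proof of Proposition \ref{prop relation of cohomology classes}) recovers the identity on $E$, and this holds for \emph{every} index $l$.

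First I would recall that each class $\omega^l_{ij}$ was defined in Proposition \ref{prop relation of cohomology classes} as $\omega^l_{ij} = (q_l)^*(\omega_{ij})$, where $q_l : E^r_B \to E$ sends the configuration (\ref{confr}) to the point using the obstacles $o_1, \dots, o_m$ together with the $l$-th copy of the robots $z^l_1, \dots, z^l_n$. Next I would compute the composition $q_l \circ \Delta : E \to E$. A point $e = (o_1, \dots, o_m, z_1, \dots, z_n) \in E$ maps under $\Delta$ to the configuration with all $r$ robot-blocks equal to $(z_1, \dots, z_n)$; applying $q_l$ then selects the obstacles together with the $l$-th block, which is again $(z_1, \dots, z_n)$. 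Hence $q_l \circ \Delta = \Id_E$ for every $l = 1, 2, \dots, r$, and consequently $\Delta^* \circ (q_l)^* = \Id$ on $H^*(E)$.

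It follows immediately that $\Delta^*(\omega^l_{ij}) = \Delta^*((q_l)^*(\omega_{ij})) = (q_l \circ \Delta)^*(\omega_{ij}) = \omega_{ij}$, and this value $\omega_{ij} \in H^{d-1}(E)$ is the \emph{same} for all $l$ since the right-hand side no longer depends on the chosen block. Therefore
$$\Delta^*(\omega^l_{ij} - \omega^{l'}_{ij}) = \Delta^*(\omega^l_{ij}) - \Delta^*(\omega^{l'}_{ij}) = \omega_{ij} - \omega_{ij} = 0,$$
which is exactly the claim that $\omega^l_{ij} - \omega^{l'}_{ij} \in \ker \Delta^*$.

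There is essentially no serious obstacle here: the entire content is the identity $q_l \circ \Delta = \Id_E$, which is a direct unwinding of the coordinate definitions of $\Delta$ and $q_l$. The only point requiring a moment of care is confirming that $q_l$ acts as the identity on the obstacle coordinates regardless of $l$ (so that the obstacle classes behave compatibly), but this is transparent from the formula defining $q_l$. This lemma is the technical heart needed to feed cohomology classes into the lower-bound criterion of Proposition \ref{lemma lower bound for para tc}, so I would present it cleanly and briefly.
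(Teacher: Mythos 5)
Your proof is correct and is exactly the argument the paper has in mind: the paper's proof simply says the claim ``follows directly from the definition of the classes $\omega^l_{ij}$'' (referring to Proposition 9.4 of \cite{CohFW21}), and your identity $q_l\circ\Delta=\Id_E$, giving $\Delta^*(\omega^l_{ij})=\omega_{ij}$ independently of $l$, is precisely the unwinding of that definition. You have merely written out the details that the paper leaves implicit.
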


\begin{proof} This follows directly from the definition of the classes $\omega^l_{ij}$; compare the proof of Proposition 9.4 from
\cite{CohFW21}. 
%Since the map $\Delta$ is defined by $$\Delta(o_1, \cdots o_m, z_1, \cdots z_n)=(o_1, \cdots o_m, z_1, \cdots z_n, z_1, \cdots, z_n, \cdots, z_1, \cdots, z_n),$$ where the last $n$ points are repeated $r$-times. Now the lemma follows from the definition of the classes $\omega^l_{ij}$ and $\omega^{l'}_{ij}$ (see proof of Proposition \ref{prop relation of cohomology classes}).

\end{proof}
  %For any $p\geq0$ we denote $\omega_{e, p}= \omega_{e_{i_1}}\omega_{e_{i_2}} \cdots \omega_{e_{i_p}}\in H^{(d-1)p}(E_B^r)$ is the cup product of cohomology classes. We have the following proposition which is higher analogue of the Proposition 9.3 of \cite{CohFW21}. \begin{prop} A free basis of $H^*(E_B^r)$ is given by the set of cohomology classes of the form $$\omega_{e^0, p_0}\omega_{e^1, p_1}\cdots \omega_{e^{r}, p_{r}}$$ where all edges corresponds to the class $\omega_{e^0, p_0}$ is in $K_m$ and the edges corresponds to the class  $\omega_{e^l, p_l}$ is between $K_m$ and $K^l_n$ for $1\leq l \leq r$.  \end{prop}

\section{Sequential parametrized topological complexity of the Fadell-Neuwirth bundle; the odd-dimensional case}\label{sec:odd}

%Consider the configuration space $F(\rr^d, m+n)$ of $m+n$ pairwise distinct points in $\rr^d$. A configuration $c \in F(\rr^d, m+n)$
%will be viewed as a collection $$c=(o_1, o_2, \cdots, o_m, z_1, z_2, \dots, z_n)$$ where the points $o_1, \dots, o_m\in \rr^d$ represent positions of the obstacles and the points $z_1, \dots, z_n\in \rr^d$ represent positions of the robots. The map 
%\begin{eqnarray}\label{FN}
%p: F(\rr^d, m+n) \to F(\rr^d, m),
%\end{eqnarray}
%where
%$$ p(o_1, o_2, \cdots, o_m, z_1, z_2, \dots, z_n)= (o_1, o_2, \cdots, o_m),$$
%is known as the Fadell - Neuwirth fibration. This map was introduced in \cite{FadN} where the authors showed that $p$ is a locally trivial fibration. The fibre of $p$ over a configuration $\mathcal O_m=\{o_1, \dots, o_m\}\in F(\rr^d, m)$
%is the space $X=F(\rr^d- \mathcal O_m, n)$, the configuration space of $n$ pairwise distinct points lying in the complement of a set $\mathcal O_m=\{o_1, \dots, o_m\}$ of $m$ fixed obstacles. 

Our goal is to compute the sequential parametrized topological complexity of the Fadell - Neuwirth bundle. 
As we shall see, the answers in the cases of odd and even dimension $d$ are slightly different. When $d$ is odd the cohomology algebra has only classes of even degree and is therefore commutative; in the case when $d$ is even the cohomology algebra is skew-commutative which imposes major 
distinction in treating these two cases. 

The main result of this section is:

\begin{theorem}\label{thm:odd}
For any odd $d\ge 3$, and for any $n\ge 1$, $m\ge 2$ and $r\ge 2$, the sequential parametrized topological complexity of the Fadell - Neuwirth bundle (\ref{FN}) equals 
$rn+m -1$. 
\end{theorem}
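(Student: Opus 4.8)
The plan is to establish the claimed value $rn+m-1$ by proving matching upper and lower bounds. For the \emph{upper bound}, I would invoke Proposition~\ref{prop upper bound}. The fibre of the Fadell--Neuwirth fibration is $X=F(\rr^d-\OO_m, n)$, which is homotopy equivalent to a wedge of spheres of dimension $d-1$; since $d\ge 3$ is odd, $X$ is $(d-2)$-connected, so we may take $k=d-2\ge 1$. The relevant dimensions are $\dim X = n(d-1)$ (as $X$ has the homotopy type of a CW-complex of that dimension) and $\dim B = m(d-1)$, giving $\hdim(E_B^r)\le rn(d-1)+m(d-1)$ after accounting for the fibration $E_B^r\to B$ with fibre $X^r$. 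Substituting into $\TC_r[p]<\frac{\hdim(E_B^r)+1}{k+1}=\frac{(rn+m)(d-1)+1}{d-1}=(rn+m)+\frac{1}{d-1}$ yields $\TC_r[p]\le rn+m$ at first pass; the sharper bound $rn+m-1$ requires using $\hdim(E_B^r)\le (rn+m)(d-1)-1$, which follows because $E_B^r$ is an open subset of $\rr^{d(m+rn)}$ of the homotopy type of a complex one dimension below the naive product estimate (the base $B=F(\rr^d,m)$ already has $\hdim = (m-1)(d-1)$, not $m(d-1)$, as the configuration space of $m$ points deformation-retracts after fixing one point). Propagating this refinement through the inequality gives the desired $\TC_r[p]\le rn+m-1$.

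For the \emph{lower bound} I would apply Proposition~\ref{lemma lower bound for para tc}, so the task becomes exhibiting $rn+m-1$ cohomology classes in $\ker[\Delta^*\colon H^*(E_B^r)\to H^*(E)]$ whose cup product is nonzero in $H^*(E_B^r)$. The building blocks are supplied by Lemma~\ref{lm:ker}: the differences $\omega^l_{ij}-\omega^{l'}_{ij}$ lie in $\ker\Delta^*$. Concretely I would take, for the ``obstacle--robot'' and ``robot--robot'' indices, classes of the form $\omega^{l}_{ij}-\omega^{1}_{ij}$ for $l=2,\dots,r$ (these vanish under $\Delta^*$), together with classes built from the base configuration space $F(\rr^d,m)$ that contribute the $m-1$ factor. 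The count should break down as $n$ classes per robot-copy beyond the first, accumulating to the $rn$ part, plus $m-1$ classes coming from the obstacle configuration, totalling $rn+m-1$.

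The main obstacle, and the technical heart of the argument, is verifying that the chosen cup product is \emph{nonzero}. Here the commutativity of the cohomology ring in the odd-dimensional case is essential: since every generator $\omega^l_{ij}$ has even degree $d-1$, the ring $H^*(E_B^r)$ is genuinely commutative, so no sign cancellations obstruct the product, and the ordering of factors is irrelevant. The strategy is to expand the product of the chosen difference classes and show that, after applying the reduction Lemma~\ref{lm:mod} and its Corollary~\ref{cor:form}, exactly one basis element of Proposition~\ref{prop basic cohomology classes} survives with a nonzero coefficient. I would organize the expansion so that the surviving monomial uses disjoint increasing index sequences $J_1,\dots,J_r$ among the robot indices $\{m+1,\dots,m+n\}$ and a disjoint sequence $J$ among the obstacle indices $\{2,\dots,m\}$, so that Corollary~\ref{new} certifies it as a genuine (nonzero) basis element. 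The delicate bookkeeping is to confirm that all other terms in the expansion either vanish by relation (b) (a repeated factor $(\omega^l_{ij})^2=0$) or collect into distinct basis elements that cannot cancel the chosen one; tracking this requires choosing the top-index conventions carefully so that each difference class contributes its ``new'' generator to a fresh slot. I expect this cancellation-free bookkeeping, rather than either dimension count, to be where the real work lies.
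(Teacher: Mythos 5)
Your upper bound argument is essentially the paper's: both reduce to Proposition \ref{prop upper bound} with $k=d-2$ together with the fact that $\hdim(E^r_B)=(rn+m-1)(d-1)$ (the paper reads this off from the additive basis of Proposition \ref{prop basic cohomology classes}, you get the same number via $\hdim B=(m-1)(d-1)$), so that part is sound.

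The lower bound, however, has a genuine gap, and it is more basic than the cancellation bookkeeping you flag as the hard part: the family of kernel classes you propose is too small. Taking $\omega^l_{ij}-\omega^1_{ij}$ for $l=2,\dots,r$ gives $n$ classes per copy beyond the first, hence $(r-1)n$ classes, and adding the $m-1$ obstacle classes yields $(r-1)n+(m-1)=rn+m-1-n$ factors in total, not $rn+m-1$; the phrase ``accumulating to the $rn$ part'' conceals this arithmetic error. Since Proposition \ref{lemma lower bound for para tc} bounds $\TC_r[p:E\to B]$ below only by the \emph{number} of kernel factors in a nonzero product, your plan can at best produce $rn+m-1-n$. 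The missing $n$ factors are obtained in the paper by using \emph{squares} of difference classes, namely $x_2=\prod_{j=m+1}^{m+n}\bigl(\omega^2_{1j}-\omega^1_{1j}\bigr)^2=-2\prod_{j}\omega^1_{1j}\omega^2_{1j}$, which counts as $2n$ kernel factors rather than $n$. This is also precisely where the hypothesis that $d$ is odd enters: the generators have degree $d-1$, and for classes $a,b$ with $a^2=b^2=0$ one has $(a-b)^2=-ab-ba$, which equals $-2ab\neq 0$ in even degree but vanishes in odd degree by graded commutativity. Your explanation of the role of oddness (``no sign cancellations obstruct the product, the ordering of factors is irrelevant'') is therefore off target --- reordering never cancels a single monomial in a graded-commutative ring; parity matters because it decides whether the squared differences survive. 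Without the squaring device your construction degenerates to (a weaker form of) the even-dimensional estimate $rn+m-2$ of Section \ref{sec:9}, and the stated equality is not established.
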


This result was obtained in \cite{CohFW21} for $r=2$. 
Note that the special case of $d=3$ is most important for robotics applications. 

As in the previous section, we shall denote the Fadell - Neuwirth bundle (\ref{FN}) by $p: E\to B$ for short; this convention will be in force in this and in the following sections. 

%Many statements of this section are valid for any dimension $d$, regardless whether it is even or odd. 

We start with the following statement which is valid without imposing restriction on the parity of the dimension $d\ge 3$. 
Note that for $d= 2$ we shall have a stronger upper bound in \S \ref{sec:9}. 

\begin{prop}\label{prop upper bound for Fadell-Neuwirth bundle}
For any $d\geq 3$ and $m\ge 2$ one has $$\TC_r[p:  E\to B] \leq rn+m-1.$$ 
\end{prop}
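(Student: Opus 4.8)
The plan is to bound $\TC_r[p:E\to B]=\secat(\Pi_r)$ from above using the general dimension-theoretic estimate of Proposition \ref{prop upper bound}, after identifying the connectivity of the fibre $X=F(\rr^d-\mathcal O_m, n)$ of the Fadell--Neuwirth bundle. The fibre of the evaluation fibration $\Pi_r$ is $(\Omega X)^{r-1}$, so what matters is the connectivity of $X$ together with the homotopical dimension of the total space $E^r_B$. First I would recall that $X=F(\rr^d-\mathcal O_m,n)$, the configuration space of $n$ points in the complement of $m$ punctures in $\rr^d$, is $(d-2)$-connected for $d\ge 3$: indeed $\rr^d$ minus $m$ points is $(d-2)$-connected, and configuration spaces of a $(d-2)$-connected open manifold of dimension $d\ge 3$ inherit this connectivity. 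Thus $X$ is $k$-connected with $k=d-2\ge 1$.

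Next I would compute the relevant dimensions. The space $E^r_B$ is the total space of a locally trivial fibration over $B=F(\rr^d,m)$ with fibre $X^r$, so $\dim(E^r_B)\le \dim B + r\cdot\dim X$. Since $F(\rr^d,m)$ is an open subset of $\rr^{dm}$ it has dimension $dm$, and $X=F(\rr^d-\mathcal O_m,n)$ is an open subset of $\rr^{dn}$ so $\dim X=dn$; hence $\dim(E^r_B)\le dm+rdn=d(m+rn)$. Feeding $k=d-2$ and this dimension bound into the inequality of Proposition \ref{prop upper bound}, one gets
\begin{equation*}
\TC_r[p:E\to B] < \frac{d(m+rn)+1}{d-1}.
\end{equation*}
The hoped-for bound is $rn+m-1$, so I would need to check that the right-hand side is at most $rn+m$, i.e. that the strict inequality forces $\TC_r[p:E\to B]\le rn+m-1$. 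Comparing $d(m+rn)+1$ with $(d-1)(rn+m)$ shows a discrepancy of order $(m+rn)+1$, which is \emph{not} small, so the crude dimension count is too weak.

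This is where the main obstacle lies: the naive homotopical-dimension estimate overcounts, and the genuinely correct approach is to replace $\dim(E^r_B)$ by its sharper homotopical dimension $\hdim(E^r_B)$. The key point is that the configuration spaces in question are homotopy equivalent to CW-complexes of far smaller dimension than their ambient embedding dimension: $F(\rr^d,m)$ has the homotopy type of a complex of dimension $(d-1)(m-1)$, and the fibre $X$ of dimension $(d-1)n$, reflecting the fact that each of the generating classes $\omega_{ij}$ sits in degree $d-1$ (Lemma \ref{lemma cohomology of configuration space}). Thus $\hdim(E^r_B)\le (d-1)(m-1)+r\cdot(d-1)n=(d-1)(m-1+rn)$. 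Substituting this sharper value gives
\begin{equation*}
\TC_r[p:E\to B] < \frac{(d-1)(m-1+rn)+1}{d-1}=(m-1+rn)+\frac{1}{d-1}.
\end{equation*}
Since $d\ge 3$ the fractional term is strictly less than $1$, and the strict inequality yields exactly $\TC_r[p:E\to B]\le rn+m-1$, as required.

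Accordingly, the crux of the argument is to justify the homotopical-dimension bound $\hdim(E^r_B)\le(d-1)(m-1+rn)$ rather than the ambient-dimension bound. I would establish this either by exhibiting an explicit CW-structure on $E^r_B$ of this dimension, or, more cleanly, by invoking the product-over-the-base structure together with the known minimal CW-models of $F(\rr^d,m)$ and of the fibre $X$, using that $E^r_B\to B$ is a fibre bundle whose base and fibre each admit low-dimensional CW-models and whose total homotopical dimension is subadditive. The connectivity input $k=d-2$ and the sharp dimension input then combine through Proposition \ref{prop upper bound} to deliver the stated bound, and I expect the verification of the minimal homotopical dimension of these configuration spaces to be the step requiring the most care.
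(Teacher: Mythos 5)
Your overall strategy coincides with the paper's: identify the fibre $X=F(\rr^d-\mathcal O_m,n)$ as $(d-2)$-connected, feed the sharper homotopical dimension $\hdim(E^r_B)\le (d-1)(rn+m-1)$ (rather than the ambient dimension) into Proposition \ref{prop upper bound}, and use $d\ge 3$ to absorb the fractional term $\tfrac{1}{d-1}$. You correctly diagnose that the crude bound $\dim(E^r_B)\le d(m+rn)$ is useless and that everything hinges on the homotopical dimension estimate. Where you diverge is in how that estimate is justified. The paper reads it off from the cohomology computation it has already done: by Proposition \ref{prop basic cohomology classes} the Leray--Hirsch basis of $H^*(E^r_B)$ has top degree $(rn+m-1)(d-1)$, and since $E^r_B$ is simply connected (as $d\ge 3$) with torsion-free integral cohomology, its homotopical dimension equals its cohomological dimension. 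You instead propose a geometric route: pull the bundle back over minimal CW-models of $B$ and of the fibre $X^r$ (of dimensions $(d-1)(m-1)$ and $r(d-1)n$ respectively, both standard consequences of the Fadell--Neuwirth tower) and invoke subadditivity of homotopical dimension for fibrations over CW bases. That route is viable and the facts you cite are standard, though you leave its verification open; note that you only need the inequality $\hdim(E^r_B)\le(d-1)(rn+m-1)$, which your argument delivers, whereas the paper's cohomological argument gives equality essentially for free since $H^*(E^r_B)$ is computed anyway for the lower bound. Either way the conclusion follows, so the proposal is sound, with the caveat that the subadditivity step you flag as delicate is precisely what the paper circumvents by using its cohomology calculation.
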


\begin{proof} The space $E^r_B$ is $(d-2)$-connected and in particular it is
simply connected (since $d\ge 3)$. By Proposition \ref{prop basic cohomology classes} the top dimension with nonzero cohomology is $(rn + m-1)(d-1)$. Hence the 
 homotopical dimension of the configuration space $\hdim (E^r_B)$ equals 
$(rn+m-1)(d-1)$. Here we use the well-known fact that the homotopical dimension of a simply connected space with torsion free integral cohomology equals its cohomological dimension. The fibre $X=F(\rr^d - \mathcal O_m, n)$ of the Fadell - Neuwirth bundle 
$p: E\to B$ is $(d-2)$-connected. Applying Proposition \ref{prop upper bound} we obtain 
$$\TC_r[p : E\to B]\, <\, rn+m-1+\frac{1}{d-1},$$
which is equivalent to our statement. 
\end{proof}

To complete the proof of Theorem \ref{thm:odd} we only need to establish the lower bound:

\begin{prop}\label{odd lower bound for Fadell-Neuwirth bundle}
For any odd $d\geq 3$ and $m\ge 2$ one has $$\TC_r[p:  E\to B] \geq rn+m-1.$$
\end{prop}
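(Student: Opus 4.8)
The plan is to apply the cohomological lower bound of Proposition \ref{lemma lower bound for para tc}: it suffices to exhibit $rn+m-1$ classes in $\ker[\Delta^\ast\colon H^\ast(E^r_B)\to H^\ast(E)]$ whose cup product does not vanish. By Proposition \ref{prop basic cohomology classes} the top nonzero degree of $H^\ast(E^r_B)$ is $(rn+m-1)(d-1)$, and every generator $\omega^l_{ij}$ has degree $d-1$; hence any product of $rn+m-1$ kernel classes lands in the top degree, and the task is to show it is a nonzero combination of the top basis elements of Proposition \ref{prop basic cohomology classes}. Since $d$ is odd, $d-1$ is even, so $H^\ast(E^r_B)$ is commutative and each generator squares to zero by Proposition \ref{prop relation of cohomology classes}(b); this is precisely the feature that makes the odd case cleaner than the even one and keeps all signs under control.

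I first record which degree-$(d-1)$ classes are available. A class $\sum c^l_{ij}\omega^l_{ij}$ lies in $\ker\Delta^\ast$ if and only if $\sum_l c^l_{ij}=0$ for every robot-involving pair (that is, for $j>m$), because $\Delta^\ast(\omega^l_{ij})=\omega_{ij}$ independently of $l$, while the obstacle classes $\omega_{ij}$ with $j\le m$ are copy-independent and are not killed. Thus in the relevant degree the kernel is spanned precisely by the cross-copy differences $\omega^l_{i,m+s}-\omega^{l'}_{i,m+s}$ of Lemma \ref{lm:ker}. The obstacle classes $\omega_{1j}$ needed to reach the top degree are therefore not available on their own and must be manufactured inside the product: the key mechanism is the three-term relation of Proposition \ref{prop relation of cohomology classes}(c), which for $j\le m$ reads $\omega^l_{1,m+s}\,\omega^l_{j,m+s}=\omega_{1j}\,(\omega^l_{j,m+s}-\omega^l_{1,m+s})$ and converts a same-copy pair of robot--obstacle generators into an obstacle class times a difference. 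Accordingly I would take the $rn+m-1$ kernel classes to be cross-copy differences $\omega^l_{i,m+s}-\omega^{l'}_{i,m+s}$, with partners and copies arranged so that, upon expansion, same-copy pairs with partners $1$ and $j$ (for $j=2,\dots,m$) arise and generate the factors $\omega_{12},\dots,\omega_{1m}$, while the remaining robot generators populate all $r$ copies; this generalises the family used for $r=2$ in \cite{CohFW21}.

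The main obstacle is to certify that the resulting top-degree expression is nonzero, i.e.\ that the contributions do not entirely cancel. The expansion is carried out by repeatedly applying the three-term relation, equivalently Lemma \ref{lm:mod}, to rewrite each monomial in the basis of Proposition \ref{prop basic cohomology classes}. A direct attempt to isolate one preferred top element can fail --- already for $r=2$, $m=n=2$ one checks that the coefficient of the naive target cancels --- so the argument must track the whole top-degree combination and exhibit a single basis element whose coefficient is a nonzero integer. Here the multiplicative bookkeeping of Corollary \ref{new} (products of basis elements supported on disjoint index sets remain basis elements) and the uniqueness statement of Corollary \ref{cor:form} (a prescribed factor $\omega^l_{j_s,j}$ pins down a unique basis element in a given decomposition) are the tools that control the cancellations; because all degrees are even there are no sign obstructions, and I expect to conclude by induction on $m$ (removing one obstacle and performing one three-term reduction at each step) or on $r$ (adjoining one copy at a time), with the base case $r=2$ supplied by \cite{CohFW21}. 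Combining the resulting inequality $\TC_r[p\colon E\to B]\ge rn+m-1$ with Proposition \ref{prop upper bound for Fadell-Neuwirth bundle} then completes the proof of Theorem \ref{thm:odd}.
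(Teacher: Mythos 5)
Your high-level strategy coincides with the paper's: apply the cohomological lower bound of Proposition \ref{lemma lower bound for para tc} to a product of $rn+m-1$ classes from $\ker\Delta^\ast$, and detect non-vanishing by expanding in the Leray--Hirsch basis of Proposition \ref{prop basic cohomology classes}. But there is a genuine gap: you never actually produce the $rn+m-1$ kernel classes, and in doing so you miss the one device that makes the count work in the odd-dimensional case. Since the degree-$(d-1)$ part of $\ker\Delta^\ast$ consists only of cross-copy differences $\omega^{l}_{ij}-\omega^{l'}_{ij}$, a naive product of such differences that is ``linear in each robot generator'' can populate at most $r-1$ of the $r$ copies per robot (plus the $m-1$ obstacle-manufacturing factors), which gives only $(r-1)n+m-1$ factors --- not enough. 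The paper closes this gap by taking the \emph{square} of a difference,
$$\bigl(\omega^{2}_{1j}-\omega^{1}_{1j}\bigr)^{2}=-2\,\omega^{1}_{1j}\,\omega^{2}_{1j}\neq 0,$$
which contributes two kernel factors per robot and covers copies $1$ and $2$ simultaneously; this is exactly where the hypothesis that $d$ is odd (so $d-1$ is even) is used, and it is exactly what fails for $d$ even. Your remark that oddness merely ``keeps all signs under control'' misidentifies its role; the classes $x_1=\prod_{i=2}^{m}(\omega^1_{i(m+1)}-\omega^2_{i(m+1)})$, $x_2=\prod_{j=m+1}^{m+n}(\omega^2_{1j}-\omega^1_{1j})^2$, $x_3=\prod_{l=3}^{r}\prod_{j=m+1}^{m+n}(\omega^l_{1j}-\omega^1_{1j})$ used in the paper give $(m-1)+2n+(r-2)n=rn+m-1$ factors precisely because of the squaring trick.

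The second missing piece is the non-vanishing itself, which you correctly flag as the main obstacle but then leave to an unexecuted induction that you only ``expect'' to close. The paper handles it concretely: it quotes from \cite{CohFW21} that $x_1x_2$ contains the basis element $\omega_{I_0J_0}\omega^1_{IJ}\omega^2_{I'J}$ with nonzero coefficient, and then observes that in the expansion of $x_3$ every term other than $\prod_{l=3}^{r}\omega^l_{IJ}$ carries a factor $\omega^1_{1j}$, which kills it against $x_2$ (because $x_2$ already contains $\omega^1_{1j}$ and generators square to zero). This short annihilation argument is what replaces your proposed induction, and without it --- and without the explicit classes --- the proposal does not yet constitute a proof.
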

Note that the assumption of this Proposition that the dimension $d$ is odd is essential as Proposition \ref{odd lower bound for Fadell-Neuwirth bundle} is false for $d$ even, see below. 

\begin{proof} We shall use Lemma \ref{lemma lower bound for para tc} and Propositions \ref{prop relation of cohomology classes}
and \ref{prop basic cohomology classes} and Lemma \ref{lm:ker}. 

Consider the cohomology classes
\begin{eqnarray*}
x_1& =& \prod_{i=2}^m(\omega^1_{i(m+1)} - \omega^2_{i(m+1)})\, \in \, H^{(m-1)(d-1)}(E^r_B),\\
x_2 &=& \prod_{j=m+1}^{m+n}(\omega_{1j}^{2} - \omega_{1j}^{1})^2 \, =\, -2 \prod_{j=m+1}^{m+n} \omega^1_{1j}\omega^2_{1j}\, \in \, H^{2n(d-1)}(E^r_B),\\
x_3&=& \prod_{l=3}^{r}\prod_{j=m+1}^{m+n}(\omega_{1j}^{l} - \omega_{1j}^{1})\, \in H^{n(r-2)(d-1)}(E^r_B).
\end{eqnarray*}
Each of these classes is a product of elements of the kernel of the homomorphism $\Delta^\ast: H^\ast(E^r_B)\to H^\ast(E)$, by Lemma \ref{lm:ker}. Proposition \ref{odd lower bound for Fadell-Neuwirth bundle} would follow once we show that 
the product 
\begin{eqnarray*}
x_1 x_2 x_3 \not= 0\, \in \, H^\ast(E^r_B)
\end{eqnarray*}
is nonzero. By Proposition \ref{prop basic cohomology classes}, the product $x_1 x_2 x_3$ is a linear combination of the basis cohomology classes and it is 
nonzero if at least one coefficient in this decomposition does not vanish. 

According to \cite{CohFW21}, cf. page 248, the product $x_1 x_2$ contains the basis element
\begin{eqnarray}
\omega_{I_0J_0} \omega^1_{IJ} \omega^2_{I'J}\, \in \, H^{(2n+m-1)(d-1)}(E^r_B)
\end{eqnarray}
with a nonzero coefficient; here 
\begin{eqnarray*}
I_0&=& (1, 2, 2, \dots, 2), \quad
J_0= (2, 3, \dots, m), 
\end{eqnarray*}
and 
\begin{eqnarray*}
I= (1, 1, \dots, 1), \quad 
I'= (2, 1, 1, \dots, 1),\quad 
J= (m+1, m+2, \dots, m+n), 
\end{eqnarray*}
with $|I_0|= |J_0|=m-1$ and $ |I|=|I'|= |J|=n.$

The product representing $x_3$ can be expanded into a sum. This sum contains the class $\prod_{l=3}^{r}\omega^l_{IJ}$ 
and each of the other terms contains a factor of type $\omega^1_{1j}$. Since obviously $x_1x_2\omega^1_{1j}=0,$ we obtain that the product $x_1x_2x_3$ contains the basis element 
$$
\omega_{I_0J_0}\cdot \omega^1_{IJ}\cdot \omega^2_{I'J}\cdot \prod_{l=3}^{r}\omega^l_{IJ}
$$
with a nonzero coefficient. Hence $x_1 x_2x_3 \not=0$ is nonzero. This completes the proof of Proposition \ref{odd lower bound for Fadell-Neuwirth bundle}.
\end{proof}

\begin{remark}
The lower bound estimate of Proposition \ref{odd lower bound for Fadell-Neuwirth bundle} fails to work in the case when the dimension $d$ of the ambient Euclidean space is even. Indeed, then the classes $\omega^l_{ij}$ have odd degree (which equals $d-1$) 
and the square of any class of odd degree vanishes (since the cohomology algebra $H^\ast(E^r_B)$ with integral coefficients is torsion free). Thus, in the case of even dimension $d$ the product $x_2$ vanishes. In the following section
we shall suggest a different estimate for $d$ even . 
\end{remark}

\section{Sequential parametrized topological complexity of the Fadell-Neuwirth bundle; the even-dimensional case}\label{sec:even}\label{sec:9}

In this section we give a lower bound for $\TC_r[p:E\to B]$ for the Fadell - Neuwirth bundle (\ref{FN}) 
in the case when the dimension $d$ of the Euclidean space $\rr^d$ is even. 
We also prove a matching upper bound for the planar case $d=2$. 
Such an upper bound can be obtained for any even $d$ by a totally different method; this material will be presented in another publication.

First we establish the following lower bound which is valid for any $d$ regardless of its parity.  

\begin{prop}\label{prop lower bound for Fadell-Neuwirth bundle even d}
For any $d\ge 2$, $r\ge 2$ and $m\ge 2$,
the sequential parametrized topological complexity of the Fadell - Neuwirth bundle 
satisfies
\begin{eqnarray}\label{lowereven}
\TC_r[p:E\to B]\ge rn+m-2.
\end{eqnarray} 
\end{prop}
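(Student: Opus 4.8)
The plan is to produce cohomology classes lying in $\ker[\Delta^\ast : H^\ast(E_B^r)\to H^\ast(E)]$ whose cup-product is a nonzero element of $H^\ast(E_B^r)$, so that Proposition \ref{lemma lower bound for para tc} immediately yields $\TC_r[p:E\to B]\ge rn+m-2$. Since we want a lower bound weaker by one than the odd-dimensional case, the natural idea is to adapt the three classes $x_1, x_2, x_3$ from the proof of Proposition \ref{odd lower bound for Fadell-Neuwirth bundle}, but replace the problematic factor $x_2$, which involves a square of an odd-degree class and therefore vanishes when $d$ is even. By Lemma \ref{lm:ker} every difference $\omega^l_{ij}-\omega^{l'}_{ij}$ lies in the kernel of $\Delta^\ast$, and products of such differences remain in the kernel since $\Delta^\ast$ is a ring homomorphism; so the whole task reduces to verifying nonvanishing of a single cup-product.

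First I would keep the two ``cheap'' factors essentially unchanged: the base class
$$
x_1 = \prod_{i=2}^{m}\left(\omega^1_{i(m+1)}-\omega^2_{i(m+1)}\right)\in H^{(m-1)(d-1)}(E_B^r),
$$
which after Lemma \ref{lm:mod} contributes the basis part $\omega_{I_0J_0}$ (with $I_0,J_0$ as in the odd case, of total degree $(m-1)(d-1)$), and the ``tail'' class
$$
x_3 = \prod_{l=3}^{r}\prod_{j=m+1}^{m+n}\left(\omega^l_{1j}-\omega^1_{1j}\right)\in H^{n(r-2)(d-1)}(E_B^r),
$$
whose expansion contains the clean term $\prod_{l=3}^{r}\omega^l_{IJ}$ together with error terms each carrying a factor $\omega^1_{1j}$. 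In place of $x_2$ I would use a \emph{single} product over the two distinguished copies,
$$
x_2' = \prod_{j=m+1}^{m+n}\left(\omega^2_{1j}-\omega^1_{1j}\right)\in H^{n(d-1)}(E_B^r),
$$
which is a first power rather than a square and so does not vanish in the even case. The total degree of $x_1x_2'x_3$ is $\bigl[(m-1)+n+n(r-2)\bigr](d-1)=(rn+m-1-n)(d-1)$; one then checks this is consistent with a product of $rn+m-2$ kernel classes, matching the claimed bound.

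The main obstacle will be the nonvanishing of $x_1x_2'x_3$: I would expand $x_2'$ into $2^n$ monomials indexed by choosing, for each $j$, either $\omega^2_{1j}$ or $-\omega^1_{1j}$, and argue (as in the odd case, citing \cite{CohFW21} page 248 and Corollary \ref{new}) that exactly one combination survives after multiplying by $x_1$ and the clean term of $x_3$, producing a genuine basis element of Proposition \ref{prop basic cohomology classes} with coefficient $\pm 1$; every other monomial must vanish, either because it repeats an index in some sequence $J_k$ (so the relevant $(\omega^l_{ij})^2=0$ applies, using statement (b) of Proposition \ref{prop relation of cohomology classes}), or because it carries an $\omega^1_{1j}$ that collides with the factors already present in $x_1$. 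The delicate point, specific to even $d$, is that the cohomology ring is skew-commutative, so I must track signs carefully when reordering factors and when invoking Lemma \ref{lm:mod}; nevertheless the key phenomenon—that a unique surviving basis monomial cannot be cancelled by any other term—is unaffected by signs, since distinct basis elements are linearly independent. This establishes $x_1x_2'x_3\neq 0$ and completes the proof.
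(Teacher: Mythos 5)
There is a genuine gap, and it is a counting error rather than a sign or cancellation issue. Your product $x_1x_2'x_3$ consists of $(m-1)+n+n(r-2)=rn+m-1-n$ factors from $\ker\Delta^\ast$, not $rn+m-2$; these agree only when $n=1$. Proposition \ref{lemma lower bound for para tc} yields a lower bound equal to the \emph{number of kernel classes} whose cup-product is nonzero, so even granting your nonvanishing argument you would only obtain $\TC_r[p:E\to B]\ge rn+m-1-n$, which is strictly weaker than the claim for every $n\ge 2$. The sentence asserting that the degree count ``is consistent with a product of $rn+m-2$ kernel classes'' does not survive the arithmetic: $(m-1)+n+n(r-2)=rn+m-2$ forces $n=1$. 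Note also that your $x_2'$ is literally the $l=2$ slice of the paper's even-case tail factor, so your total product coincides with the paper's $x_1x_3$, i.e.\ you have dropped an ingredient rather than replaced one.

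The missing idea is a supply of $n-1$ additional kernel classes that do not involve squares. The paper takes
$$
x_2 \;=\; \prod_{j=m+2}^{m+n}\bigl(\omega^{1}_{(j-1)j}-\omega^{2}_{(j-1)j}\bigr)\;\in\;H^{(n-1)(d-1)}(E_B^r),
$$
built from the robot--robot classes $\omega_{(j-1)j}$ with consecutive indices in $\{m+1,\dots,m+n\}$. These are pairwise distinct generators, so no factor is squared and the class survives in even dimension $d$; together with $x_1$ ($m-1$ factors) and $x_3=\prod_{l=2}^{r}\prod_{j=m+1}^{m+n}(\omega^l_{1j}-\omega^1_{1j})$ ($n(r-1)$ factors) this gives exactly $rn+m-2$ kernel classes. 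The nonvanishing of $x_1x_2x_3$ is then checked by exhibiting a single basis element, namely $\omega_{I_0J_0}\omega^1_{KJ}\omega^2_{IJ}\cdots\omega^r_{IJ}$ with $K=(2,m+1,\dots,m+n-1)$, that occurs with nonzero coefficient and cannot be produced by any other choice of terms in the expansions. Your overall strategy (kernel classes, basis expansion, uniqueness of the surviving monomial) is the right one, but without an analogue of this $x_2$ the factor count, and hence the bound, falls short by $n-1$.
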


\begin{proof}
As an illustration, consider first the special case $m=2$ and $n=1$, i.e. the situation when we have one robot and two obstacles.  
Then the product of $r$ classes 
\begin{eqnarray}\label{15}
(\omega^1_{23}-\omega^2_{23}) \cdot \prod_{l=2}^r (\omega^l_{13}-\omega^1_{13})
\end{eqnarray}
lying in the kernel of $\Delta^\ast$ 
contains the basis element 
\begin{eqnarray}\label{targetm2}
\omega^1_{23}\cdot \prod_{l=2}^r \omega^l_{13}\end{eqnarray}
with a nonzero coefficient. Indeed,  (\ref{15}) equals
$
(\omega^1_{23}-\omega^2_{23}) \cdot \left[\prod_{l=2}^r \omega^l_{13}-\omega^1_{13}\cdot \alpha\right]
$
where $\alpha$ is a polynomial in the classes $\omega^l_{13}$ with $l\in \{2, \dots, r\}$.
Opening the brackets gives 
$$
\omega^1_{23}\cdot \prod_{l=2}^r \omega^l_{13} - \omega^2_{23} \cdot \prod_{l=2}^r \omega_{13}^l - \omega^1_{23}\omega^1_{13}\alpha + \omega^2_{23}\omega^1_{13}\alpha.
$$
Here the second and the third terms are the sums of basis elements each containing the factor $\omega_{12}$ and hence  distinct from 
(\ref{targetm2}). The basis elements of the fourth term all contain the factor $\omega^1_{13}$ and therefore are also distinct from 
(\ref{targetm2}). Thus, (\ref{15}) is nonzero and Lemma \ref{lemma lower bound for para tc} gives the desired lower bound in the case $m=2$, $n=1$.

Returning to the general case, consider the following three cohomology classes 
$x_1, x_2, x_3\in H^\ast(E^r_B)$, where
\begin{eqnarray*}
x_1&=& \prod_{i=2}^m(\omega^1_{i(m+1)} - \omega^2_{i(m+1)})\, \in H^{(m-1)(d-1)}(E^r_B),\\
x_2 &=& \prod_{j=m+2}^{m+n}(\omega_{(j-1)j}^{1} - \omega_{(j-1)j}^{2})\, \in H^{(n-1)(d-1)}(E^r_B), \\
x_3 &=& \prod_{l=2}^{r}\prod_{j=m+1}^{m+n}(\omega_{1j}^{l} - \omega_{1j}^{1}) \, \in H^{n(r-1)(d-1)}(E^r_B).
\end{eqnarray*}

Note that in the case when $n=1$ the class $x_2$ is not defined; however, the arguments below show that in the case $n=1$ the class
$x_1 x_3$ (which is the product of $r+m-2$ classes lying in the kernel of $\Delta^\ast$) is nonzero. 

Each of the classes $x_1, x_2, x_3$ is the product of elements of the kernel of $\Delta^\ast$, see Lemma \ref{lm:ker}, and the total number of the factors is $rn+m-2$. Hence, by Lemma \ref{lemma lower bound for para tc}, our statement (\ref{lowereven}) will follow 
once we know that the product $x_1x_2x_3\not=0\in H^\ast(E^r_B)$ is nonzero.  

Consider the following sequences
\begin{eqnarray*}
\begin{array}{lll}
I_0=(2, 2, \dots, 2), &\mbox{where} &|I_0|=m-2,\\
J_0=(3, 4, \dots, m),  &\mbox{where} &|J_0|=m-2,\\
I=(1, 1, \dots, 1), &\mbox{where} &|I|=n,\\
K=(2, m+1, m+2, \dots, m+n-1),  &\mbox{where} &|K|=n,\\
J=(m+1, m+2, \dots, m+n),  &\mbox{where}&|J|=n.
\end{array}
\end{eqnarray*}
We claim that the basis element
\begin{eqnarray}\label{target}
\omega_{I_0J_0}\omega^1_{KJ} \omega_{IJ}^2 \omega_{IJ}^3\dots \omega_{IJ}^r
\end{eqnarray}
appears in the decomposition of the product $x_1x_2x_3$ with a nonzero coefficient.

In the special case $n=1$ the class (\ref{target}) has the form
\begin{eqnarray}\label{target1}
\omega_{23}\omega_{24}\dots\omega_{2 m}\cdot \omega^1_{2 (m+1)}\cdot \prod_{l=2}^r \omega^l_{1 (m+1)}.
\end{eqnarray}

Consider the basis elements which appear in the decomposition of the class $x_1$. 
For $m=2$ the class $x_1$ equals $\omega^1_{23} - \omega^2_{23}$ and for $m>2$ 
we can write
\begin{eqnarray}\label{x1}
x_1= \sum_{R\subset [m]}\pm \, \left(\prod_{i\in R} \omega^1_{i (m+1)}\cdot \prod_{i\in R^c} \omega^2_{i (m+1)}\right),
\end{eqnarray}
where $R$ runs over all subsets (including $R= \emptyset$) of the set $[m]=\{2, 3, \dots, m\}$ and $R^c$ denotes the complement $[m]-R$. The terms of (\ref{x1}) are basis elements for $m=2$; for $m>2$ they can be decomposed into basis elements using Lemma \ref{lm:mod}. For example, taking $R=[m]$ and applying Lemma \ref{lm:mod} we find that one of the $2^{m-2}$ basis elements which appear in the decomposition of the product $\prod_{i=2}^m\omega^1_{i (m+1)}$ is the class
\begin{eqnarray}\label{class16}
\omega_{23}\omega_{24}\dots\omega_{2m}\omega^1_{2(m+1)}= \omega_{I_0J_0} \omega^1_{2(m+1)}.
\end{eqnarray}
This class clearly is a factor of (\ref{target}). For $m>2$ each other basis elements in the decomposition of $\prod_{i=2}^m\omega^1_{i (m+1)}$
has a factor of type $\omega^1_{i (m+1)}$ with $2<i\le m$, see Corollary \ref{cor:form}. 

Note also the basic elements of the form 
\begin{eqnarray}\label{16b}
\omega_{23}\omega_{24}\dots \omega_{2 (m-1)}\omega_{2 m}\omega^2_{k (m+1)}= \omega_{I_0J_0} \omega^2_{k (m+1)}, \quad \mbox{where}\quad 2\le k\le m,
\end{eqnarray}
which arise in the basic element decomposition of the summand of (\ref{x1}) with $R=\emptyset$. 

The basis element decomposition of $x_2$ is given by
\begin{eqnarray}\label{x2}
\sum_S \pm \left(\prod_{j\in S} \omega^1_{(j-1) j} \cdot \prod_{j\in S^c} \omega^2_{(j-1) j}\right),
\end{eqnarray}
where $S$ runs over all subsets $S\subset \{m+2, m+3, \dots, m+n\}$, including $S=\emptyset$. The symbol 
$S^c$ denotes the complement $\{m+2, m+3, \dots, m+n\}- S$. Taking $S= \{m+2, m+3, \dots, m+n\}$ in (\ref{x2}) gives the class
$\omega^1_{KJ}$, without the factor $\omega^1_{2 (m+1)}$, which is a factor of (\ref{target}). Note that the missing factor 
$\omega^1_{2 (m+1)}$ appears in (\ref{class16}). 

The basis element decomposition of the class $x_3$ is given by
\begin{eqnarray}\label{16}\label{x3}
\sum_{T_2, \dots, T_r} \pm \, \, \omega^1_{I_1 T_1}\omega^2_{I_2 T_2}\omega^3_{I_3 T_3} \dots \omega^r_{I_r T_r}, 
\end{eqnarray}
where $T_2, T_3, \dots, T_r$ run over subsets of the set $\{m+1, m+2, \dots, m+n\}$ 
such that every two of these sets cover $\{m+1, m+2, \dots, m+n\}$  and $T_1=\cup_{j=2}^r T_j^c$ where $T_j^c$ stands for the complement 
$\{m+1, m+2, \dots, m+n\} -T_j$. We identify the subsets of $\{m+1, m+2, \dots, m+n\}$  with increasing sequences in the obvious way. 
The sequences $I_1, I_2, \dots, I_r$ in (\ref{16}) all have the form $(1, 1, \dots, 1)$. Taking in (\ref{16}) $T_2=T_3=\dots=T_r=J$ gives the class $\omega^2_{IJ} \omega^3_{IJ} \dots \omega^r_{IJ}$ which is a factor 
of (\ref{target}). 

We have seen that the class (\ref{target}) appears as a product of specific basis elements in the decomposition of 
$x_1$, $x_2$ and $x_3$. We show below that the class (\ref{target}) appears {\it only} with the set of choices indicated above and hence it cannot be cancelled. 

Firstly, we note that only $x_3$ involves terms $\omega^l_{ij}$ with $l\ge 3$ and $j\ge m+1$. Therefore the only choice 
$T_3=T_4=\dots=T_r=J$ in (\ref{x3}) may possibly lead to (\ref{target}). 

Secondly, the basis elements in the decompositions of $x_2$ and $x_3$ have no factors $\omega^l_{ij}$ with $j\le m$. Hence the factor $\omega_{I_0J_0}$ of (\ref{target}) may only arise from the basis elements of the decomposition of $x_1$. It is clear that this may happen either when $R=[m]$ with (\ref{class16}) corresponding to the modification $(2, 2, \dots, 2)$ of the sequence
$(2, 3, \dots, m)$, or with $R=\emptyset$, see above. Any basis element of $x_1$ distinct from (\ref{class16}) 
has either a factor of type $\omega^1_{i (m+1)}$ with $3\le i\le m$ or a factor of type $\omega^2_{k (m+1)}$ with $2\le k\le m$. 
Such factors do not appear in (\ref{target}). If the set $T_1$ in (\ref{x3}) contains $m+1$ then we could have the factor 
$$\omega^1_{i (m+1)}\omega^1_{1 (m+1)}=\pm \omega_{1 i}(\omega^1_{i (m+1)} - \omega^1_{1 (m+1)})$$ with the factor 
$\omega_{1 i}$ missing in (\ref{target}). Similarly, the set $T_2$ might contain $m+1$ leading to the product 
$$\omega^2_{k (m+1)}\omega^2_{1 (m+1)}= \pm \omega_{1 k}(\omega^2_{k (m+1)} - \omega^2_{1 (m+1)})$$ with the factor 
$\omega_{1 k}$ being absent in (\ref{target}). 
Thus, we see that (\ref{class16}) is the only basis element of the decomposition of $x_1$ which can contribute into (\ref{target}). 

Comparing (\ref{x2}) and (\ref{target}) and using Corollary \ref{new} we see that the only basis element of the sum (\ref{x2}) with 
$S= \{m+2, m+3, \dots, m+n\}$ can contribute into (\ref{target}). This basis element, together with the factor 
$\omega^1_{2 (m+1)}$, gives $\omega^1_{KJ}$. 

Finally, examining (\ref{x3}), we see that the only way obtaining (\ref{target}) is by taking $T_2=J$ and hence $T_1=\emptyset$,
since, as we established earlier, one must have $T_3=\dots=T_r=J$ and $T_1=\cup_{j=2}^r T_j^c$.

Thus,  
the basis element (\ref{target}) appears in the decomposition of the product $x_1x_2x_3$ with a nonzero coefficient and hence $x_1x_2x_3\not=0.$ This completes the proof. 
\end{proof}

Next we state the main result of this section:

\begin{theorem}\label{thm:even}
For any $m\geq 2$, $n\ge 1$ and $r\ge 2$, the $r$-th sequential parametrized topological complexity of the Fadell-Neuwirth bundle in the plane is given by$$\TC_r[p : F(\rr^2, n+m) \to F(\rr^2, m)]=rn + m - 2.$$
\end{theorem}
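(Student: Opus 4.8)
The plan is to take the lower bound for granted and concentrate entirely on the matching upper bound $\TC_r[p: F(\rr^2, n+m) \to F(\rr^2, m)] \le rn + m - 2$. Indeed, Proposition \ref{prop lower bound for Fadell-Neuwirth bundle even d} already gives $\TC_r[p:E\to B] \ge rn+m-2$ for every $d \ge 2$, so the theorem reduces to producing sections over an open cover of $E^r_B$ with $rn+m-1$ members. The first point to stress is that the general estimate of Proposition \ref{prop upper bound for Fadell-Neuwirth bundle} is off by exactly one in the planar case: for $d=2$ the fibre $X = F(\rr^2 - \mathcal O_m, n)$ is merely connected (it is $(d-2)$-connected with $d-2 = 0$), so the fibre $(\Omega X)^{r-1}$ of $\Pi_r$ is disconnected and the Schwarz connectivity estimate only yields $\TC_r \le rn+m-1$. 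The same loss of one unit afflicts the soft bound $\secat(\Pi_r) \le \cat(E^r_B)$, because $E^r_B$ is aspherical and its cohomology carries a nonzero product of $rn+m-1$ classes of degree $d-1=1$, forcing $\cat(E^r_B) = rn+m-1$. Hence neither the connectivity nor the category bound can reach $rn+m-2$, and a genuinely geometric argument special to the plane is required.

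The approach I would take exploits that for $d=2$ every space occurring is a $K(\pi,1)$, together with the iterated Fadell-Neuwirth decomposition of $p$ as a tower of fibrations
$$ F(\rr^2, m+n) \to F(\rr^2, m+n-1) \to \cdots \to F(\rr^2, m+1) \to F(\rr^2, m) = B, $$
in which the $(j+1)$-st robot is added by a locally trivial fibration whose fibre over a fixed configuration is a plane with $m+j$ punctures, homotopy equivalent to a wedge of circles. The idea is to build the required parametrized sequential motion planner one robot at a time along this tower: having positioned the obstacles (which are parameters, constant in time) and the first $j$ robots, route the $(j+1)$-st robot through its $r$ prescribed positions along explicitly chosen planar paths avoiding the obstacles and the already-placed robots. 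For a single robot in a punctured plane this is a parametrized sequential motion planning problem whose domains of discontinuity are governed by the simple combinatorics of which puncture or earlier robot a straight segment would meet. I would invoke Lemma \ref{lemma para tc by secat} to work with a convenient model $K$ for the time schedule, and the product and fibre-product inequalities to assemble the stagewise sections into a global cover of $E^r_B$.

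The delicate point — and the step I expect to be the main obstacle — is the bookkeeping that produces $rn+m-1$ open sets rather than $rn+m$, that is, saving the final unit so as to match the lower bound exactly. A naive stagewise count is subadditive and overshoots by one, and the extra unit must be recovered from the planar geometry: one arranges that on the top-dimensional stratum the last robot (or the last obstacle degree of freedom) is handled by a single global rule, absorbing what would otherwise be an additional domain of continuity. Making this saving rigorous is where asphericity is essential, since it is precisely the feature distinguishing $d=2$ from the higher even-dimensional case treated in the companion publication, and it is the direct analogue of the classical fact that $\TC(F(\rr^2,n)) = 2n-3$ sits one below the dimension bound. Once the cover with $rn+m-1$ sections is exhibited, combining it with Proposition \ref{prop lower bound for Fadell-Neuwirth bundle even d} yields the stated equality.
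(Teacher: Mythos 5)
You are right that the content of the theorem is the upper bound $\TC_r[p]\le rn+m-2$, and your diagnosis of why the generic estimates stall at $rn+m-1$ is accurate. But your proposal does not actually prove the upper bound: you yourself flag that ``the bookkeeping that produces $rn+m-1$ open sets rather than $rn+m$'' is the main obstacle, and the paragraph you offer in its place (``one arranges that on the top-dimensional stratum the last robot is handled by a single global rule, absorbing what would otherwise be an additional domain of continuity'') is a hope, not an argument. A stagewise induction along the tower $F(\rr^2,m+n)\to\cdots\to F(\rr^2,m)$ is genuinely subadditive, and nothing in your outline identifies where the one saved unit comes from; asphericity alone cannot supply it, since the aspherical bounds you cite ($\cat(E^r_B)=rn+m-1$ and the Schwarz connectivity estimate) are exactly the ones that are off by one. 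So as written the proposal has a genuine gap at its central step.

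The paper closes this gap by a quite different and much shorter device, special to the plane: identifying $\rr^2$ with $\cc$ and using the affine transformation $u\mapsto (u-u_1)/(u_2-u_1)$ to produce a homeomorphism $F(\cc,s)\cong F(\cc\smallsetminus\{0,1\},s-2)\times F(\cc,2)$ compatible with the Fadell--Neuwirth projections. This splits off a trivial factor with $\TC_r[\Id:F(\cc,2)\to F(\cc,2)]=0$, and by the product inequality (Proposition \ref{prop product inequality}) reduces the problem to the bundle $q\colon F(\cc\smallsetminus\{0,1\},n+m-2)\to F(\cc\smallsetminus\{0,1\},m-2)$, whose base has homotopical dimension $m-2$ rather than $m-1$. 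The generic upper bound of Proposition \ref{prop upper bound} applied to $q$ (fibre connected, homotopical dimension $n$) then gives $rn+(m-2)$ directly --- the missing unit is recovered by lowering the dimension of the base, not by a cleverer cover. If you want to salvage your approach, you would need to make the explicit planner construction precise at the level of the companion papers \cite{CohFW}, \cite{FarW}, which is a substantially longer undertaking; otherwise the two normalized points $0,1$ are the idea your proof is missing.
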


\begin{proof}
Proposition \ref{prop lower bound for Fadell-Neuwirth bundle even d} gives the lower bound. In the proof below we
establish the upper bound. We shall adopt the method developed in  \cite{CohFW}. 
As in \cite{CohFW}, we identify $\rr^2$ with the set of complex numbers $\cc$ and for any $s\geq 3$ consider the 
homeomorphism 
$$h_s: F(\cc, s) \to F(\cc \smallsetminus \{0, 1\}, s-2)\times F(\cc, 2)$$ 
given by 
$$h_s(u_1, u_2, ..., u_s)=\left(\left(\frac{u_3-u_1}{u_2-u_1}, \frac{u_4-u_1}{u_2-u_1}, ..., \frac{u_s-u_1}{u_2-u_1} \right), 
(u_1, u_2)\right),$$ 
where $u_i\in \cc$, $u_i\not= u_j$ for $i\not=j$. Thus, using the algebraic structure of complex numbers we may split the configuration space into a product. 
We have the following commutative diagram 
$$\xymatrix{
F(\cc, n+m) \ar[d]_{p} \ar[r]^{\hskip -2 cm {h_{n+m}}} &  F(\cc \smallsetminus \{0, 1\}, n+m-2)\times F(\cc, 2) \ar[d]^{q\times \Id} \\
F(\cc, m) \ar[r]_{\hskip -2 cm {h_m}} & F(\cc \smallsetminus \{0, 1\}, m-2)\times F(\cc, 2)
}$$
where $p$ is the Fadell - Neuwirth fibration, $q$ is analogue of the Fadell - Neuwirth bundle for the plane with points $0, 1$ removed and with $m-2$ obstacles, and $\Id$ is the identity map. In the case when $m=2$ we shall consider the space 
$F(\cc \smallsetminus \{0, 1\}, m-2)$ as consisting of a single point; then the diagram above will make sense for $m=2$ 
(two obstacles only) as well.

Noting that $\TC_r[\Id : F(\cc, 2) \to F(\cc, 2)]=0$ and applying Proposition \ref{prop product inequality} we obtain 
\begin{equation*}
\label{equation para tc r=2}
\TC_r[p: F(\cc, n+m) \to F(\cc, m)]  \leq \TC_r[q: E' \to B'],
\end{equation*}
where $E'=F(\cc \smallsetminus \{0, 1\}, n+m-2)$ and $B'=F(\cc \smallsetminus \{0, 1\}, m-2) $.
The fibre of the fibration $q: E' \to B'$ is the configuration space $F(\cc \smallsetminus \mathcal{O}_m, n)$, which is connected and has homotopical dimension $n$. The homotopical dimension of the base  $F(\cc \smallsetminus \{0, 1\}, m-2)$ is $m-2$. Proposition \ref{prop upper bound} gives $\TC_r[q: E'\to B'] \le
rn+ m-2.$ 
Hence, $$\TC_r[p: F(\cc, n+m) \to F(\cc, m)]\leq rn+ m-2.$$ This completes the proof.
\end{proof}

\begin{remark} Theorems \ref{thm:odd} and \ref{thm:even} leave unanswered the question about the sequential parametrized topological complexity for the Fadell - Neuwirth bundle for $d\ge 4$ even. The upper bound of Proposition \ref{prop upper bound for Fadell-Neuwirth bundle} and the lower bound of Proposition \ref{prop lower bound for Fadell-Neuwirth bundle even d} specify the answer with indeterminacy one. In a forthcoming publication we shall extend the upper bound $rn +m -2$ for any $d\ge 2$ even. We shall employ the method which was briefly described  in \cite{FarW}, \S 7 for the case $r=2$.

\end{remark}

%Now we consider the remaining case $m=2$. We have the equivalence of fibrations $p$ and $q_2$, appearing in the following diagram
%$$\xymatrix{
%F(\cc, n+2) \ar[dr]_{p} \ar[rr]^{h_{2+n}}  &&  F(\cc \smallsetminus \{0, 1\}, n)\times F(\cc, 2) \ar[dl]^{q_2} \\
%& F(\cc, 2) &&
%}$$ where $q_2$ is the projection onto the second factor. Since $q_2$ is the trivial fibration, using Example \ref{example para tc trivial fibration} we have %$$\TC_r[q_2: F(\cc \smallsetminus \{0, 1\}, n)\times F(\cc, 2) \to F(\cc, 2)]=\TC_r(F(\cc \smallsetminus \{0, 1\}, n)).$$This means that,
%$$\TC_r[p: F(\cc, n+2) \to F(\cc, 2)]=\TC_r(F(\cc \smallsetminus \{0, 1\}, n)).$$ The homotopy dimension of $F(\cc \smallsetminus \{0, 1\}, n)$ is $n$, thus $\TC_r(F(\cc \smallsetminus \{0, 1\}, n))\leq rn.$ %Now we use Proposition \ref{prop lower bound for Fadell-Neuwirth bundle even d} to get the lower bound $$rn \leq \TC_r[p: F(\cc, 2+n) \to F(\cc, 2)].$$ 
%Therefore we obtain the desired upper bound for the Fadell-Neuwirth bundle $$\TC_r[p: F(\cc, n+2) \to F(\cc, 2)]\leq rn.$$ This completes the proof.

\section{$\TC$-generating function and rationality}

\subsec{} Definition \ref{def:main} associates with each fibration $p: E\to B$ an infinite sequence of integer numerical invariants, 
\begin{eqnarray}\label{seq}
\TC_2[p: E\to B], \quad \TC_3[p: E\to B], \quad \dots,\quad \TC_r[p: E\to B],\quad \dots
\end{eqnarray}
In order to understand the global behaviour of the sequence (\ref{seq}), it can be organised into a generating function
\begin{eqnarray}\label{gen}
\mathcal F(t) = \sum_{r\ge 1} \TC_{r+1}[p: E\to B]\cdot t^r,
\end{eqnarray}
which we shall call {\it the $\TC$-generating function of the fibration $p: E\to B$}. 
Various analytic properties of the generating function $\mathcal F(t)$ reflect asymptotic behaviour of the sequence (\ref{seq}) and topological structure of the fibration $p: E\to B$. Rationality of the generating function (\ref{gen}) would mean existence of a linear recurrence relation between the integers (\ref{seq}) representing sequential parametrized topological complexities for various values of $r$. 

\begin{lemma}
The $\TC$-generating function (\ref{gen}) depends only on the fiberwise homotopy type of the fibration $p: E\to B$.
\end{lemma}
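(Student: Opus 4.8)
The plan is to reduce the claim for the whole generating function to the already-established fiberwise homotopy invariance of each coefficient. Recall that Corollary \ref{fwhom} states precisely that if two fibrations $p: E\to B$ and $p': E'\to B$ are fiberwise homotopy equivalent, then $\TC_r[p: E\to B] = \TC_r[p': E'\to B]$ for every fixed $r\ge 2$. Since the $\TC$-generating function is built out of exactly these invariants, the strategy is to show that equality of all coefficients forces equality of the formal power series.

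First I would fix two fibrations $p: E\to B$ and $p': E'\to B$ that are fiberwise homotopy equivalent, and recall what this means concretely: there exist fiberwise maps $f: E\to E'$ and $g: E'\to E$ over $B$ such that $g\circ f$ and $f\circ g$ are fiberwise homotopic to the respective identity maps. This is exactly the hypothesis from which Corollary \ref{fwhom} was derived (via Proposition \ref{prop homotopy invariant sptc}). I would emphasize that the integer $r$ plays no role in the construction of the comparison maps $f^r, f^I, g^r, g^I$ appearing in the proof of Proposition \ref{prop homotopy invariant sptc}: for each value of $r$ the same pair $(f,g)$ induces the required diagram, so Corollary \ref{fwhom} applies uniformly across all $r\ge 2$.

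The core step is then a one-line application: for every $r\ge 2$ we have $\TC_r[p: E\to B] = \TC_r[p': E'\to B]$, and hence, substituting into the definition \eqref{gen}, the two generating functions agree coefficient by coefficient,
\[
\mathcal F(t) = \sum_{r\ge 1} \TC_{r+1}[p: E\to B]\cdot t^r = \sum_{r\ge 1} \TC_{r+1}[p': E'\to B]\cdot t^r = \mathcal F'(t).
\]
Since two formal power series are equal if and only if all their coefficients coincide, this establishes that $\mathcal F(t)$ depends only on the fiberwise homotopy type of the fibration.

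I do not anticipate any genuine obstacle here; the statement is essentially a repackaging of Corollary \ref{fwhom}, and the only thing worth stating carefully is that the invariance of Corollary \ref{fwhom} holds for \emph{all} $r$ simultaneously (rather than for a single fixed $r$), which is immediate since the comparison maps do not depend on $r$. If one wished to be scrupulous, the one point to note is that the generating function should be regarded as an element of the formal power series ring $\zz[[t]]$ (or allowing $+\infty$ coefficients, as a sequence-valued object), so that "equality of functions" is interpreted as equality of coefficient sequences; with that convention the result is immediate.
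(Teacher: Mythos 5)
Your argument is correct and is essentially the paper's own proof, which simply observes that the statement is equivalent to Corollary \ref{fwhom}; you have merely spelled out the coefficient-by-coefficient comparison that the paper leaves implicit. Nothing further is needed.
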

\begin{proof}
This is equivalent to Corollary \ref{fwhom}. 
\end{proof}

\subsec{} In paper \cite{FarO19} the authors introduced the $\TC$-generating function 
\begin{eqnarray}\label{gen:x}
\mathcal F_X(t) = \sum_{r\ge 1} \TC_{r+1}(X)\cdot t^r
\end{eqnarray}
associating with a finite path-connected CW-complex $X$ a formal power series (\ref{gen:x}). The paper \cite{FarO19} contains many examples when this power series can be explicitly computed and in all these examples 
$\mathcal F_X(t)$ is representable by a rational function of the form 
\begin{eqnarray}\label{form}
\mathcal F_X(t)  = \frac{A}{(1-t)^2} + \frac{B}{1-t}+ p(t), \quad \mbox{where}\quad p(t)\quad \mbox{is a polynomial}.
\end{eqnarray}
This property of $\mathcal F_X(t)$ is equivalent to the recurrence relation
$$\TC_{r+1}(X) = \TC_r(X) +A$$
valid for all sufficiently large $r$; we refer to \cite{FarKS20} for more detail. 
%To explain this fact we mention the well-known power series expansions
%$$(1-t)^{-1} = 1+t+t^2+t^3+\dots\quad\mbox{and}\quad  (1-t)^{-2}=1+2t+3t^2+4t^3+\dots$$
%implying that for every large $r$ the coefficient of $t^{r}$ in the rational function (\ref{form}) equals $rA+B-A$. 
In many examples the principal residue $A$ in (\ref{form}) equals the Lusternik - Schnirelmann category, 
\begin{eqnarray}\label{cat}
A=\cat(X).
\end{eqnarray} 
These examples lead to the Rationality Question of \cite{FarO19}: {\it for which finite CW-complexes the formal power series (\ref{gen:x}) represents a rational function of the form (\ref{form}) with the top residue equal the Lusternik - Schnirelmann category (\ref{cat})?}

\subsec{} In the subsequent paper \cite{FarKS20} the authors analysed a class of CW-complexes violating the Ganea conjecture and found examples $X$ such that the $\TC$-generating function (\ref{gen:x}) is a rational function of the form (\ref{form}) although the top residue $A$ is distinct from $\cat(X)$. 

\subsec{} Next we mention a few examples when the generating function (\ref{gen}) can be computed. 

Firstly, suppose that $p: E\to B$ is the trivial fibration with path-connected fibre $X$. Then the generating function (\ref{gen}) 
equals $\mathcal F_X(t)$. 

Secondly, consider the Hopf bundle $p: S^3\to S^2$. Then, according to Proposition \ref{princ}, we have 
$$\TC_{r+1}[p:S^3\to S^2]=\TC_{r+1}(S^1)= \cat((S^1)^r)=r \quad \mbox{for any}\quad r\ge 1.$$
Therefore, the $\TC$-generating function of the Hopf bundle equals
$$\sum_{r\ge 1} r\cdot t^r \, =\, \frac{t}{(1-t)^2} \, =\, \frac{1}{(1-t)^2} - \frac{1}{1-t}.$$ 
In this case the principal residue equals $A = 1= \cat(S^1)$. 

Exactly the same answer for the $\TC$-generating function can be obtained in the case of a more general Hopf bundle $p: S^{2n+1}\to {\Bbb {CP}}^n$. 

\subsec{} Consider now the $\TC$-generating function of the Fadell - Neuwirth bundle 
$p: F(\rr^d, m+n)\to F(\rr^d, m)$ which was analysed in this paper.  
We start with the case when the dimension $d$ is odd. Then we have the $\TC$-generating function
$$\mathcal F(t) \, =\, \sum_{r=1} [(r+1)n +m-1]\cdot t^r = \frac{n}{(1-t)^2} + \frac{m-1}{1-t} -n - m+1.
$$
It is a rational function of the form (\ref{form}) with the principal residue 
$$A= n = \cat(F(\rr^d-\mathcal O_m, n))$$ equal category of the fibre. Using Theorem 1.3 from \cite{GonG15} we 
may write the $\TC$-generating function of the fiber $X=F(\rr^d-\mathcal O_m, n)$ (for any $d\ge 2$) as follows
\begin{eqnarray}
\mathcal F_X(t) = n \cdot \sum_{r\ge 1} (r+1)t^r = \frac{n}{(1-t)^2} -n. 
\end{eqnarray}

The $\TC$-generating function of the Fadell - Neuwirth bundle is slightly different in the case when $d=2$: 
$$\mathcal F(t) \, =\, \sum_{r=1} [(r+1)n +m-2]\cdot t^r = \frac{n}{(1-t)^2} + \frac{m-2}{1-t} -n - m+2.$$
In this case the power series represents a rational function of form (\ref{form}) and the principal residue equals the Lusternik - Schnirelmann category of the fibre. 

We see that for the Fadell - Neuwirth bundle the $\TC$-generating functions of the bundle and of the fiber have the same principal residue and their difference has a simple pole at $t=1$. 
This suggests the following general question: 

{\it How the $\TC$-generating functions of a fibration $p:E\to B$ and of its fibre $X$ are related?} 
More specifically we may ask: 
{\it For which fibrations $p: E\to B$ the differences 
$$\TC_{r+1}[p:E\to B] - \TC_{r}[p:E\to B] \quad \mbox{and}\quad \TC_{r}[p:E\to B]-\TC_r(X)$$
are eventually constant?}
This stabilisation happens in the case of the Fadell - Neuwirth fibration.

\end{document}